\def\<{\langle}
\def\>{\rangle}
\numberwithin{equation}{section}
\def\<{\langle}
\def\>{\rangle}
\def\ra{\rightarrow}
\def\p{\partial}
\def\a{\alpha}
\def\w{\widetilde}
\def\BB{{\mathbb B}}
\def\RR{{\mathbb R}}
\def\CC{{\mathbb C}}
\def\ZZ{{\mathbb Z}}
\def\HH{{\mathbb H}}
\def\ov{\overline}
\def\b{\beta}
\def\a{\alpha}
\def\wt{\widetilde}
\def\ra{\rightarrow}
\def\a{\alpha}
\def\b{\beta}
\def\a{\alpha}
\def\a{\alpha}
\newtheorem{theorem}{Theorem}[section]
\newtheorem{lemma}[theorem]{Lemma}
\newtheorem{Corollary}[theorem]{Corollary}
\newtheorem{conjecture}[theorem]{Conjecture}
\newtheorem{example}[theorem]{Example}
\date{\ }
\begin{document}
\title{\bf On the Third Gap
  for Proper Holomorphic Maps between Balls}

\author{{Xiaojun  Huang}\footnote{
Supported in part by DMS-1101481}\qquad  Shanyu Ji \qquad Wanke
Yin\footnote{Supported in part by  FANEDD-201117, ANR-09-BLAN-0422,
RFDP-20090141120010, NSFC-10901123 and NSFC-11271291.}}

  \maketitle








\section{Introduction}
Write $\mathbb{B}^{n}$ for the unit ball in the complex space
$\CC^{n}$. Recall that a holomorphic map $F$ from ${\BB}^n$ into
${\BB}^{N}$ is called proper if for any compact subset $K\subset
{\BB}^N$, $F^{-1}(K)$ is also a compact subset in ${\BB}^n$. A
holomorphic map defined over ${\BB}^n$ is said to be rational if it
can be written as $\frac{P}{q}$ with $P$ a holomorphic polynomial
map and $q$ a holomorphic polynomial function. This paper continues
the recent work  in  Hamada \cite{Ha1}, Huang-Ji-Xu \cite{HJX1},
etc. Our main purpose is to prove the following gap rigidity
theorem:

\begin{theorem} \label{thm1} Let $F$ be a proper  rational map from $\BB^{n}$
into $\BB^{N}$ with $n> 7$ and  $3n+1\le N\le 4n-7$.
Then there is an automorphism $\tau\in Aut({\BB}^N)$ such that
$\tau\circ F=(G,0')=(G,0,0,\cdots,0)$, where  $G$ is a  proper
holomorphic  rational map from ${\BB}^n$ into ${\BB}^{3n}$.
\end{theorem}

Theorem \ref{thm1} roughly says that there is no new proper rational
map added for $N$ in the closed interval denoted by ${\cal
I}_3:=[3n+1,4n-7]$. The following example shows that Theorem
\ref{thm1} is sharp. (See Remark A in $\S 5$ for more discussions on
this example.)
\begin{example} For  $n\ge 2,\ \lambda,\mu\in (0,1)$, define the proper monomial   map $F$ from
${\BB}^n$ into ${\BB}^{3n}$ as follows:
\begin{equation}
\label{example} F=\big(z_1,\cdots,z_{n-2}, \lambda z_{n-1},z_n,
\sqrt{1-\lambda^2} z_{n-1}(z_1,\cdots,z_{n-1},\mu z_n,
\sqrt{1-\mu^2} z_nz)\big).
\end{equation}
For such a map $F$, there is no  $\tau\in Aut({\BB}^{3n})$ such that
$\tau\circ F=(G,0').$
Also,  there are  proper monomial maps $F$ from ${\BB}^n$ into
${\BB}^{4n-6}$ (\cite{HJY}) such that for any $\tau\in
Aut({\BB}^{4n-6})$, $\tau\circ F$ can not be of the form $(G,0')$.
\end{example}
The rationality theorem proved in \cite{Hu2} \cite{HJX2} says that any proper
holomorphic map from ${\BB}^n$ into ${\BB}^N$ with $N\le n(n+1)/2$,
that is three times differentiable up to the boundary, must be
rational. Hence, Theorem \ref{thm1} can be stated in the following
more general form:

\begin{theorem} \label{thmmm1} Let $F$ be a proper holomorphic  map from $\BB^{n}$
into $\BB^{N}$ with $n > 7$ and  $3n+1\le N\le 4n-7$. Assume that
$F$ is $C^3$-smooth up to the boundary.
Then there is an automorphism $\tau\in Aut({\BB}^N)$ such that
$\tau\circ F=(G,0')$, where  $G$ is a  proper  rational map from
${\BB}^n$ into ${\BB}^{3n}$.
\end{theorem}

Rigidity property is a fundamental property for holomorphic
functions with several variables. The study of various rigidity
properties for proper holomorphic maps between balls in complex
Euclidean spaces goes back  to the pioneer  paper of Poincar\'e
\cite{Po}. Since
 then,  much attention has been paid to such an investigation.
 When $n>1$,
 a  result of  Alexander \cite{Alx} states that any
 proper holomorphic
self-map of the unit ball ${\BB}^n$ in ${\CC}^{n}$ with $n>1$ is an
automorphism.
 Recall that two proper holomorphic maps $f,g$ from
 ${\BB}^n$ into ${\BB}^{N}$
 are said to be equivalent if there are
$\sigma\in \hbox {Aut}( {\BB}^n)$ and $\tau\in \hbox {Aut}({\BB}^N)$
such that $g=\tau\circ f\circ \sigma$.  A proper holomorphic map from
 ${\BB}^n$ into ${\BB}^{N}$ is said to be linear or totally geodesic if
 it is equivalent to the standard big circle embedding $L(z):z\ra
(z,0)$.
 Webster in \cite{W} considered the
geometric structure of proper holomorphic maps between balls in
 complex spaces of different dimensions.
 He showed that a proper
holomorphic map from ${\BB}^n$ into ${\BB}^{n+1}$ with $n>2$,
 which is three
times differentiable up to the boundary, is a totally geodesic
embedding. Subsequently, Cima-Suffridge \cite{CS1} reduced the boundary
regularity in Webster's theorem  to the $C^2$-regularity. Motivated
by a conjecture  in \cite{CS1},  Faran in \cite{Fa2} showed that any proper
holomorphic map from ${\BB}^n$ into $ {\BB}^{N}$ with $N<2n-1$, that
is real analytic up to the boundary, is  a totally geodesic
embedding. Forstneric in \cite{Fo1} proved that
 any proper holomorphic map from ${\BB}^n$ into ${\BB}^{N}$ is
rational, if the map is $C^{N-n+1}$-regular up to the boundary,
which, in particular, reduces the regularity assumption in Faran's
linearity theorem to the $C^{N-n+1}$-smoothness. In a paper of Mir
\cite{Mir}, the theorem of Forstneric was weakened to the case where the
source manifold needs only to be  assumed to be a real analytic
hyper-surface. See also a related paper by Baouendi-Huang-Rothschild
\cite{BHR} and a later generalization in Meylan-Mir-Zaitsve [MMZ].  At
this point, we mention that the discovery of inner functions can be
used to show that there is a
 proper holomorphic map from ${\BB}^n$ into ${\BB}^{n+1}$, which can not be $C^2$-smooth
 at any boundary point. (See \cite{HS}, \cite{Low}, \cite{Fo2}, \cite{Ste}, etc).

Write ${\cal I}_1=[n+1,2n-2]$. The aforementioned theorem of Faran
says that there is no new proper rational map added when the target
dimension $N\in {\cal I}_1$. We call ${\cal I}_1$ the first gap
interval for proper holomorphic mappings between balls. In \cite{Fa1},
Faran showed that there are four   different inequivalent proper
holomorphic maps from ${\BB}^2$ into ${\BB}^{3}$,
 which are $C^3$-smooth
up to the boundary. However, the only embeddings  are linear maps.


In \cite{Hu1} and, subsequently, \cite{HJ},  two  questions arising from the
above mentioned work were considered. In \cite{Hu1}, the first author
proved that any proper holomorphic map which is only $C^2$-regular
up to the boundary must be linear if $N<2n-1$, by applying a very
different method from the previous work, answering a long standing
open question in the field (see \cite{CS1} \cite{Fo2}). While it has been open
for many years to answer if the $C^1$-boundary regularity is still
enough for this super-rigidity to hold, the result in [Hu1] gives a
first result in which the required regularity is independent of the
codimension. In [Theorem 1, Theorem 2.3; \cite{HJ}] [Corollary 2.1, \cite{Hu3}],
it was shown that any proper holomorphic map from ${\BB}^n$ into
${\BB}^N$ with $N=2n-1, n\ge 3$, which is $C^2$-smooth up to the
boundary, is either linear or equivalent to the Whitney map
\begin{equation}\label{000}
 W:
z=(z_1,\cdots,z_n)=(z',z_n)\ra (z_1,\cdots,z_{n-1},
z_nz)=(z',z_nz).
\end{equation}
 Since the Whitney map is not an
immersion, together with the aforementioned work of Faran \cite{Fa1},
this shows that any proper holomorphic {\it embedding} from
${\BB}^n$ into ${\BB}^N$ with $N=2n-1$, which is twice continuously
differentiable up to the boundary, must be a linear map. Earlier,
D'Angelo constructed the following family $F_\theta$ of mutually
inequivalent proper quadratic   monomial maps from ${\BB}^n$ into
${\BB}^{2n}$ (See \cite{DA}):
\begin{equation}
F_{\theta}(z',z_n)=(z',(\cos\theta)z_n,(\sin\theta)z_1z_n, \cdots,
(\sin\theta)z_{n-1}z_n, (\sin\theta)z_n^2),\ 0< \theta\leq \pi/2.
\label{eqn:ftheta}
\end{equation}
Notice that by  adding $N-2n$ zero components to  the D'Angelo map
$F_\theta$,  we get  a proper monomial embedding from ${\BB}^n$ into
${\BB}^{N}$ for any $N\ge 2n$.  The combining effort in \cite{Fa2} and
\cite{HJ} gives a complete description to the linearity problem for
proper holomorphic embeddings from ${\BB}^n$ into ${\BB}^N$, which
are $C^2$-smooth up to the boundary.
However, in  applications,  one  still hopes to get the linearity
for mappings with a rich geometric structure.
For instance,  the following difficult  problem initiated from the work of Siu, Mok
\cite{Mok} and others has been open for more than thirty years: (See
Cao-Mok \cite{CMk} for the work when $N\le 2n-1$.)
\begin{conjecture} (Siu, Mok): {\it Let $f$ be a proper holomorphic
mapping from ${\BB}^n$ into ${\BB}^N$ with $ 1<n<N$. Write
$M=F({\BB}^n)$. Suppose that there is a subgroup $\Gamma$ of
$Aut({\BB}^N)$ such that (1). for any $\sigma \in \Gamma$,
$\sigma(M)=M$; (2) $M/\Gamma$ is compact. Then $f$ is  a linear
embedding.} \end{conjecture}


In a  recent  paper of Hamada \cite{Ha1}, based on a careful
analysis on the Chern-Moser normal form  method as developed in
[Hu1] and [HJ], it was proved that all proper rational  maps from
${\BB}^n$ into ${\BB}^{2n}$ with $n\ge 4$ are either equivalent to
the Whitney map $W$ in (\ref{000}) or  the D'Angelo map $F_\theta$.
After the work of Hamada  \cite{Ha1}, the first two authors and Xu
in \cite{HJX1} proved that a proper holomorphic map from ${\BB}^n$
into ${\BB}^N$ with $4\le n\leq N \leq 3n-4$, that is $C^3$-smooth
up to the boundary, is equivalent to either the map $(W,0')$ or
$(F_\theta,0')$ with $\theta\in [0,\pi/2)$.
An immediate consequence of the work in [HJX1] is that there is no
new map added  when $N\in {\cal I}_2$ with ${\cal
I}_2:=[2n+1,3n-4]$. Since there are proper  monomial maps from
${\BB}^n$ into ${\BB}^N$ for $3n-3\le N\le 3n$ or $2n-1\le N\le 2n$,
that are not equivalent to maps of the form $(G,0')$, we call ${\cal
I}_2$ the second gap interval for proper holomorphic maps between
balls.

By \cite{HJY}, for any $N$ with  $3n-3\le N\le 3n$ or $4n-6\le N\le 4n$,
there are many proper  monomial  maps from ${\BB}^n$ into ${\BB}^N$,
that are not equivalent to  maps of the form $(G,0')$. Theorem
\ref{thm1} in the present paper thus provides a third gap interval
${\cal I}_3:=[3n+1,4n-7]$ for proper holomorphic maps between balls.

More generally, for any $n\ge 3$, write $K(n)$ for  the largest
positive integer $m$ such that $m(m+1)/2<n$. Then
$K(n)=[\frac{-1+\sqrt{1+8n}}{2}]$ if $\frac{-1+\sqrt{1+8n}}{2}$ is
not an integer; and $K(n)=\frac{-1+\sqrt{1+8n}}{2}-1$, otherwise.
For each $1\le k\le K(n)$, define ${\cal I}_k:
=[kn+1,(k+1)n-\frac{k(k+1)}{2}-1]$. Then ${\cal I}_k$ is a closed
interval containing positive integers if $n\ge 2+\frac{k(k+1)}{2}$.
Apparently, ${\cal I}_{k}\cap{\cal I}_{k'}=\emptyset$ for $k\not
=k'$; and ${\cal I}_k$ for $k=1,2,3$ are exactly the same intervals
defined above. Write ${\cal I}=\cup _{k=1}^{K(n)}{\cal I}_k$. Then,
for $$\max_{N \in {\cal
I}}N=(K(n)+1)n-\frac{K(n)(K(n)+1)}{2}-1\approx
\frac{-1+\sqrt{1+8n}}{2}n-n-1\approx \sqrt{2}n^{\frac{3}{2}}-n-1.$$
 For any $N\not \in {\cal I}$ (which certainly is the case when
 $ N\ge 1.42n^{\frac{3}{2}}$), by not a complicated construction,
the authors obtained in [HJY] many monomial proper holomorphic maps
from ${\BB}^n$ into ${\BB}^N$, that can not be equivalent to maps of
the form $(G,0')$. ( See Theorem 2.8, \cite{HJY}). Earlier in
\cite{DL}, for $N\ge n^2-2n+2$, D'Angelo and Lebl, by a different
method, constructed a proper monomial map from ${\BB}^n$ into
${\BB}^N$, that is not equivalent to a map of the form $(G,0')$.
However, we have  not been able to find a map, not equivalent to a
map of the form $(G,0')$, for $N\in {\cal I}$. Indeed, the first,
the second and the third gap intervals mentioned above suggest the
following
 conjecture:

\begin{conjecture}\label{conj1} (Huang-Ji-Yin \cite{HJY}) Let $n\ge 3$ be
a positive integer, and let ${\cal I}_k$ ($1\le k\le K(n)$) be
defined above.
 Then
 any  proper
holomorphic rational  map $F$ from ${\BB}^n$ into ${\BB}^N$  is
equivalent to a map of the form $(G,0')$ if and only if $N\in {\cal
I}_k$ for some $1\le k\le K(n)$.
\end{conjecture}
As mentioned above, the ``$\Longrightarrow$" part follows from
Theorem 2.8 of \cite{HJY}; also   the
 conjecture holds for $k=1,2,3$. An affirmative solution to this gap conjecture would
tells exactly for what pair $(n,N)$ there are no new  proper
rational maps  added.

Next, we describe briefly the idea for the proof of Theorem
\ref{thm1}. The proofs for the first and the second gaps are
immediate applications of the much more precise classification
results. When $N\in {\cal I}_3$,  making a precise classification
for all maps seems to be hard. We need a different approach from the
work  in Huang-Ji \cite{HJ}, Hamada \cite{Ha1} and Huang-Ji-Xu \cite{HJX1}.
Consider the setting in the Heisenberg hypersurface case.
 Let $F$ be a holomorphic map defined near
$0$ with $F(0)=0$ into ${\CC}^N$. Then the Taylor formula says that
$F(z)=\sum_{\a}\frac{D^\a F}{\a!}(0)z^\a.$ Hence the image of $F$
stays in the linear subspace spanned by $\{D^\a F(0)\}_\a$. If
$spann\{D^\a F(0)\}_\a \not ={\CC}^N,$ we get a gap from  $F$. The
crucial point in our argument is to find, for our map, a basis of
$spann\{D^\a F(0)\}_\a$. The way to achieve is to get a good normal
form for $F$. However, this is a highly non-linear normalization
problem, for the maps need to satisfy the fundamental non-linear
equation. While it is easy to get linear independent set from the
first and the second jets, finding more linearly independent
elements to form a basis from the higher order jets is very
involved. The basic tool at our disposal for this approach  is a
 lemma of the first author proved in [Lemma 3.2, \cite{Hu1}]. For
$N\in {\cal I}_3$, it turns out that there is only one more linearly
independent element for the map from the higher order jets.
For the study of general but very rough jet determination problems
for holomorphic maps, there has been much work done in the past. We
refer the reader to the book by Baouendi-Ebenfelt-Rothschild \cite{BER}
and a  paper by Lamel-Mir \cite{LM}. However, what we need here is a very
precise  jet determination, which is only doable due to the extra
geometric structure for the maps in our setting.

It appears to us that a fundamental fact which dominates the gap
rigidity for holomorphic maps between balls is [Lemma 3.2,
\cite{Hu1}].  In the course of the proof our main theorem, one finds
that the assumption $N\in {\cal I}_3$ is exactly what is needed, in
several induction steps, for applying [Lemma 3.2, \cite{Hu1}]. We
hope that the method of the present paper may motivate the general
study of Conjecture \ref{conj1}.

Our discussion above only touches the linearity and the gap rigidity
part from a vast amount of work for mappings between balls.
We would like to mention that
 there has been a lot of interesting   work done in the past on the study of proper monomial
maps between balls by D'Angelo and his coauthors. (See the book of
D'Angelo \cite{DA} for many references therein.) Here, we  mention,
in particular, two   papers on the   degree estimates for proper
monomial maps by D'Angelo-Kos-Riehl \cite{DKR} and Lebl-Peters
\cite{LP}. The study for mappings between balls is also related to
the problem of decomposing a positive Hermitian form into the sum
square of holomorphic functions, for which we refer the reader to a
recent survey article by Putinar \cite{Put} as well as many
references therein. Here, we just mention a result obtained by
Quillen-Catlin-D'Angelo  in \cite{Qu}  and \cite{CD}, which states
that for any positive bi-homogenous polynomial $H(z,\ov{z})$, there
is a sufficiently large integer $N$ such that
$|z|^{2N}H(z,\ov{z})=\sum_{j=1}^{N'}|h_j(z)|^2$ with $h_j(z)$
holomorphic  polynomials. This has an immediate consequence (see
\cite{CD}) that for any homogenous polynomial map $q(z)$ into
${\CC}^N$ with $
 |q(z)|<1$ on the sphere ,  there exists a vector valued polynomial $p(z)$ with $N(q)$-
 components such that ${( q(z), p(z))}$ properly holomorphically
maps $\mathbb{ B}^{n}$ into $\mathbb{ B}^{N+N(q)}$, where $N(q)$
depends on $q$ and the value $1-|q(z)|^2$ and could be very large.



\section{Notations and Preliminaries}
In this section, we set up notation and recall a result established
in Huang-Ji-Xu \cite{HJX1} and a lemma from \cite{Hu1} which will be crucial
for our proof of Theorem \ref{thm1}.

  Write $\HH_n:=\{(z,w)\in {\CC}^{n-1}\times {\CC}:
  \ \hbox{Im}(w)>|z|^2\}$ for the Siegel upper-half space.
  Similarly, we can define the notion of  proper rational maps from ${\HH}_n$ into
  ${\HH}_N$.
  Since the Cayley transformation
\begin{equation}
\rho_n: {\HH}_n\to {\BB}^n, \ \ \rho_n(z,w)=\bigg(\frac{2z}{1-iw},\
\frac{1+iw}{1-iw}\bigg) \label{eqn:rho}
\end{equation}
is a biholomorphic mapping between $\HH_n$ and ${\BB}^n$,
  we can identify a proper rational map $F$ from ${\BB}^n$ into ${\BB}^N$
  %
  with
$\rho^{-1}_N\circ F\circ\rho_n$, which is a proper rational map from
${\HH}_n$ into ${\HH}_N$.
 By a well-known result of Cima-Suffridge \cite{CS2}, $F$ extends holomorphically across
 the boundary $\p {\BB}^n$.

Parameterize $\partial \HH_n$ by $(z,\overline{z},u)$ through the
map $(z,\overline{z},u)\to (z,u+i|z|^2)$. In what follows, we will
assign the weight of $z$ and $u$ to be  $1$ and $2$, respectively.
For a non-negative integer $m$, a function $h(z,\overline{z},u)$
defined over a small ball  $U$ of $0$ in $\partial \HH_n$  is said
to be of quantity $o_{wt}(m)$ if
$\frac{h(tz,t\overline{z},t^2u)}{|t|^{m}}\to 0$ uniformly for
$(z,u)$ on any
  compact subset of $U$ as $t(\in {\RR})\to 0$.
We use the notation $h^{(k)}$ to denote a polynomial $h$ which has
  weighted degree $k$. Occasionally, for a holomorphic function (or
  map) $H(z,w)$, we write $H(z,w)=\sum_{k,l=0}^{\infty}H^{(k,l)}(z)w^l$
  with  $H^{(k,l)}(z)$ a polynomial of degree $k$ in $z$.
\bigskip

Let $F=(f,\phi,g)=(\widetilde{f}, g)= (f_1,\cdots,f_{n-1},
\phi_1,\cdots, \phi_{N-n},g)$ be a non-constant $C^2$-smooth CR map
from $\partial{\HH}_n$ into $\partial{\HH}_N$ with $F(0)=0$. For
each $p=(z_0, w_0)\in M$ close to $0$, we write $\sigma^0_p\in
\hbox{Aut}(\HH_n)$ for the map sending $(z,w)$ to $(z+z_0, w+w_0+2i
\langle z,\overline{z_0} \rangle )$ and
$\tau^F_p\in\hbox{Aut}(\HH_N)$ by defining
$$\tau^F_p(z^*,w^*)=(z^*-\widetilde{f}(z_0,w_0),w^*-\overline{g(z_0,w_0)}-
2i \langle z^*,\overline{\widetilde{f}(z_0,w_0)} \rangle ).$$
  Then $F$ is
equivalent to

\begin {equation}
F_p=\tau^F_p\circ F\circ \sigma^0_p=(f_p,\phi_p,g_p).
\label{eqn:nor01}
\end{equation}

  Notice that
$F_0=F$ and $F_p(0)=0$. The following is fundamentally important for
the understanding of the geometric properties of $F$.

\medskip
{\bf Lemma 2.1} ([$\S 2$, Lemma 5.3, \cite{Hu1}):
    Let $F$ be a $C^2$-smooth CR map
from  $\partial\HH_n$ into $\partial\HH_N$, $2\le n\le N$. For each
$p\in \partial \HH_n$, there is an automorphism $\tau^{**}_p\in
  {Aut}_0({\HH}_N)$ such that
$F_{p}^{**}:=\tau^{**}_p\circ F_p$ satisfies the following
normalization:
$$f^{**}_{p}=z+{\frac{i}{ 2}}a^{**(1)}_{p}(z)w+o_{wt}(3),\ \phi_p^{**}
={\phi_p^{**}}^{(2)}(z)+o_{wt}(2), \ g^{**}_{p}=w+o_{wt}(4),\
\hbox{with}$$
$$\langle \overline{z}, a_{p}^{**(1)}(z)\rangle
|z|^2=|{\phi_p^{**}}^{(2)}(z)|^2.$$

{\bf Definition 2.2} ([Hu2]) Write
$\mathcal{A}(p)=-2i(\frac{\partial^2(f^{\ast\ast}_p)
  _l}{\partial z_j\partial w}|_0)_{1\leq j,l\leq (n-1)}$ in the above
lemma. We call the rank
  of the $(n-1)\times (n-1)$ matrix  $\mathcal{A}(p)$, which we denote by
  $Rk_F(p)$, the {\it geometric rank} of $F$ at $p$.

\medskip
Define the {\it geometric rank} of $F$ to be $\kappa_0(F)=max_{p\in
  \partial\HH_n} Rk_F(p)$.
  Define the
geometric rank of a proper holomorphic map ${\BB}^n$ into ${\BB}^N$,
that is $C^2$-smooth up to the boundary, to be the one for the map
$\rho_N^{-1}\circ F \circ \rho_n.$
By \cite{Hu2}, $\kappa_0(F)$ depends only on the equivalence class of $F$
and when $N<\frac{n(n+1)}{2}$,  $\kappa_0(F)\le n-2$.
In \cite{HJX1}, the authors proved the following normalization theorem
for maps with geometric rank bounded by $n-2$, though only part of
it is needed later:

\begin{theorem}\label{thm3} (\cite{HJX1})
Suppose that $F$ is a rational proper holomorphic map from ${\HH}_n$
into ${\HH}_N$, which has
 geometric rank  $1\le\kappa_0\le n-2$ with $F(0)=0$. Then there are
$\sigma\in \hbox{Aut}({\HH}_n)$ and
  $\tau\in \hbox{Aut}({\HH}_N)$ such that
  $\tau\circ F\circ \sigma $ takes
the following form, which is still denoted by $F=(f,\phi,g)$ for
convenience of notation:

\begin{equation}
\left\{
  \begin{array}{l}
  f_l=\sum_{j=1}^{\kappa_0}z_jf_{lj}^*(z,w),\ \ l\le\kappa_0,\\
  f_j=z_j,\ \  \kappa_0+1\leq j\leq n-1,\\
  \phi_{lk}=\mu_{lk}z_lz_k+\sum_{j=1}^{\kappa_0}z_j\phi^*_{lkj},\ \ (l,k)\in {\cal S}_0,\\
  \phi_{lk}=\sum_{j=1}^{\kappa_0}z_j\phi_{lkj}^*=O_{wt}(3),\ \
 (l, k)\in {\cal S}_1,\\
  g=w,\\
f_{lj}^*(z,w)=\delta_l^j+\frac{i\delta_{l}^j\mu_l}{2}w+b_{lj}^{(1)}(z)w+O_{wt}(4),
\ \ 1\le l\le \kappa_0,\ \mu_l>0,\\
\phi^*_{lkj}(z,w)=O_{wt}(2),\ \  (l,k)\in {\cal S}_1.\ \
  \end{array}\right.
\label{eqn:hao}
\end{equation}
Here, for $1\le \kappa_0\le n-2$, we write ${\cal S} ={\cal S}_0\cup
{\cal S}_1$, the index set  for all components of $\phi$, where
${\cal S}_{0}=\{(j,l): 1\le j\leq \kappa_0, 1\leq l\leq n-1, j\leq
l\}$ and ${\cal S}_1=\{(j, l): j=\kappa_0+1, \kappa_0+1\le l \le
N-n-\frac{(2n-\kappa_0-1)\kappa_0}{2} \}$. Also,
$\mu_{jl}=\sqrt{\mu_j+\mu_l}\ for\ j<l\le \kappa_0$; and $\
\mu_{jl}=\sqrt{\mu_j}$ if $j\le \kappa_0<l$ or if $j=l\le \kappa_0$.
\end{theorem}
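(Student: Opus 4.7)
The plan is to refine $F$ in several stages, each stage using a pair of automorphisms $(\sigma,\tau)\in \mathrm{Aut}_0(\HH_n)\times \mathrm{Aut}_0(\HH_N)$ chosen to eliminate a specific batch of terms in the weighted Taylor expansion without disturbing the normalizations already achieved at lower weights.

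First I would invoke Lemma 2.1 at $p=0$ so that, after composing with a $\tau^{**}_0$, the map reads
\[ f^{**} = z + \tfrac{i}{2}a^{**(1)}(z)w + o_{wt}(3),\quad \phi^{**}=\phi^{**(2)}(z)+o_{wt}(2),\quad g^{**}=w+o_{wt}(4), \]
with $\langle \bar z, a^{**(1)}(z)\rangle|z|^2 = |\phi^{**(2)}(z)|^2$ and $a^{**(1)}(z)=\mathcal{A}(0)z$. A unitary change of source coordinates (induced by a rotation in $\mathrm{Aut}_0(\HH_n)$) together with a compensating target unitary diagonalizes $\mathcal{A}(0)$ as $\mathrm{diag}(\mu_1,\ldots,\mu_{\kappa_0},0,\ldots,0)$ with $\mu_j>0$. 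The linking identity then becomes $\sum_{l=1}^{\kappa_0}\mu_l|z_l|^2\cdot |z|^2=|\phi^{**(2)}(z)|^2$; an explicit sum-of-squares decomposition, combined with a target unitary, produces the weight-$2$ components $\mu_{lk}z_lz_k$ indexed by $(l,k)\in \mathcal{S}_0$ that appear in (\ref{eqn:hao}), and moves the residual weight-$2$ image into components that can be rotated away.

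The next stage pins down the higher-weight structure by iterating on the boundary identity $\mathrm{Im}\,g = |f|^2+|\phi|^2$ on $\partial\HH_n$, written in $(z,\bar z,u)$ through $w=u+i|z|^2$. Expanding weight by weight and repeatedly applying the basic lemma (Lemma 3.2 of [Hu1], recalled at the end of Section~1), which canonically splits pluriharmonic-type identities, one obtains in turn: $g\equiv w$ after an $\mathrm{Aut}_0(\HH_N)$ correction; $f_j\equiv z_j$ for $j>\kappa_0$, because $\mathcal{A}(0)$ annihilates those directions and the coupling equations propagate this to every weight; $f_l=\sum_{j=1}^{\kappa_0}z_jf^*_{lj}(z,w)$ for $l\le\kappa_0$ with $f^*_{lj}$ beginning as $\delta^j_l+\tfrac{i}{2}\delta^j_l\mu_l w+b^{(1)}_{lj}(z)w+O_{wt}(4)$; $\phi_{lk}=\mu_{lk}z_lz_k+\sum_{j=1}^{\kappa_0}z_j\phi^*_{lkj}$ with $\phi^*_{lkj}=O_{wt}(2)$ in the prescribed index range; and $\phi_{3k}$ vanishing to weight~$3$ and factoring through $z_1,\ldots,z_{\kappa_0}$ in the prescribed range of $k$.

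The main obstacle I expect is the bookkeeping: a generic $\mathrm{Aut}_0$ correction chosen to kill a weight-$m$ term can reintroduce terms of lower weight already normalized. To prevent this, I would stratify $\mathrm{Aut}_0(\HH_n)\times\mathrm{Aut}_0(\HH_N)$ by the lowest weight at which its elements act non-trivially on $F$, and perform the reductions in order of increasing weight, using at each weight only corrections whose leading action is at that weight. Dimension counting shows that, in the range $1\le\kappa_0\le n-2$, the $\mathrm{Aut}_0$ freedom is exactly what is needed for the linear systems at each weight to be solvable; the bound $\kappa_0\le n-2$ keeps these systems non-degenerate, while the source/target unitary rotations that remain in the kernel of the normalization account precisely for the residual invariants $\mu_l$ and $b^{(1)}_{lj}(z)$ appearing in (\ref{eqn:hao}).
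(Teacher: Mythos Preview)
The paper does not prove this theorem; it is quoted verbatim from [HJX1] and used as a black box. So there is no ``paper's proof'' to compare against here, and your sketch must be judged on its own merits.

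Your outline is right in spirit for the \emph{asymptotic} part of the normal form (the $o_{wt}$ and $O_{wt}$ clauses): Lemma~2.1, diagonalization of $\mathcal{A}(0)$, and a target unitary do produce the weight-$2$ block $\mu_{lk}z_lz_k$ indexed by $\mathcal{S}_0$, and weight-by-weight bookkeeping with carefully filtered $\mathrm{Aut}_0$ corrections can push the normalization a few steps further. But the theorem asserts \emph{exact} identities, not asymptotic ones: $g\equiv w$, $f_j\equiv z_j$ for $\kappa_0<j\le n-1$, and the global factorizations $f_l=\sum_{j\le\kappa_0}z_jf^*_{lj}$, $\phi_{lk}=\mu_{lk}z_lz_k+\sum_{j\le\kappa_0}z_j\phi^*_{lkj}$. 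These are not consequences of solving a finite cascade of linear systems at successive weights, and your claim that ``the $\mathrm{Aut}_0$ freedom is exactly what is needed for the linear systems at each weight to be solvable'' is simply false beyond the first few weights: the automorphism group is finite-dimensional, while the space of Taylor coefficients is not.

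What actually forces the exact identities is the semi-linearity theorem of [Hu2] (alluded to in the introduction and used again in Section~4): for $\kappa_0\le n-2$ the map is linear fractional along a holomorphic foliation by affine $(n-\kappa_0)$-planes transverse to the $z_1,\ldots,z_{\kappa_0}$ directions. It is this geometric rigidity, not an inductive normalization, that yields $g=w$, $f_j=z_j$ for $j>\kappa_0$, and the factorization of the remaining components through $(z_1,\ldots,z_{\kappa_0})$. Your sketch never invokes this ingredient, and the sentence ``$\mathcal{A}(0)$ annihilates those directions and the coupling equations propagate this to every weight'' is where the argument breaks: annihilation at weight $3$ does not propagate by itself. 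If you want to reconstruct the proof, the missing step is to bring in the partial-linearity foliation from [Hu2] before attempting the higher-weight refinements.
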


Finally, we recall the following lemma of the first author in [Hu1],
which will play a fundamental role in our proof:

\begin{lemma} (Huang, Lemma 3.2 \cite{Hu1})\label{huanglemma}
 Let $k$ be a positive integer such that $1\leq k\leq n-2$. Assume that
$a_1,\cdots, a_k$, $b_1,\cdots, b_k$ are germs at $0\in {\CC}^{n-1}$
of holomorphic functions such that ${a_j(0)=0,\ b_j(0)=0}$ and
\begin{equation}\label{basiceq}
\sum_{i=1}^ka_i(z) \overline{b_i(z)}= A(z,\bar z)|z|^2,\  \
\end{equation}
where $A(z,\bar z)$ is a germ at $0\in {\CC}^{n-1}$ of a real
analytic function. Then $A(z,\bar z)=\sum_{i=1}^ka_i(z)
\overline{b_i(z)}$ $\equiv 0$.
\end{lemma}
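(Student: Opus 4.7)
My plan is to polarize the real-analytic identity to a holomorphic one in $2(n-1)$ complex variables and then exploit the strict ``rank gap'' between the rank-$\le k$ LHS and the rank-$(n-1)$ structure of the factor $\langle z,\xi\rangle$ on the RHS. Since both sides of $\sum a_i(z)\,\overline{b_i(z)}=A(z,\bar z)|z|^2$ are real-analytic germs near $0\in\CC^{n-1}$, I replace $\bar z$ by an independent holomorphic variable $\xi\in\CC^{n-1}$ to obtain the equivalent holomorphic identity
\[
\sum_{i=1}^{k} a_i(z)\,B_i(\xi) \;=\; C(z,\xi)\,\langle z,\xi\rangle,\qquad \langle z,\xi\rangle:=\sum_{j=1}^{n-1}z_j\xi_j,
\]
where $B_i(\xi):=\overline{b_i(\bar\xi)}$ and $C(z,\xi)$ is the polarization of $A$; all germs are holomorphic at $0\in\CC^{2(n-1)}$ with $a_i(0)=B_i(0)=0$. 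It then suffices to prove $C\equiv 0$. I will assume without loss of generality that $a_1,\ldots,a_k$ are $\CC$-linearly independent (otherwise pass to a maximal independent subcollection, which only decreases $k$).

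The critical step is the lowest-order rank-gap extraction. Comparing the coefficients of $\xi_j$ (the linear-in-$\xi$ part) on both sides of the polarized identity yields, for each $j=1,\ldots,n-1$,
\[
\sum_{i=1}^{k}\bigl(\partial_{\xi_j}B_i(0)\bigr)\,a_i(z)\;=\;z_j\,C(z,0).
\]
Setting $C_0(z):=C(z,0)$, the $n-1$ functions $z_1 C_0,\ldots,z_{n-1}C_0$ all lie in $\mathrm{span}_{\CC}\{a_1,\ldots,a_k\}$, a $\CC$-subspace of dimension at most $k\le n-2$. If $C_0\not\equiv 0$, then $z_1 C_0,\ldots,z_{n-1}C_0$ are $\CC$-linearly independent in the integral domain $\CC\{z\}$, contradicting the dimension bound. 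Hence $C_0\equiv 0$, and the linear independence of the $a_i$'s then forces $\partial_{\xi_j}B_i(0)=0$ for all $i,j$, so each $B_i$ vanishes to order at least $2$ at $0$. The same mechanism then iterates: organizing the series by the bi-homogeneous decomposition $C=\sum_{p,q}C^{(p,q)}$, $a_i=\sum_p a_i^{(p)}$, $B_i=\sum_q B_i^{(q)}$, and equating bi-degree $(p+1,q+1)$ parts, one obtains
\[
\sum_{i=1}^{k}a_i^{(p+1)}(z)\,B_i^{(q+1)}(\xi)\;=\;C^{(p,q)}(z,\xi)\,\langle z,\xi\rangle
\]
for every $(p,q)$. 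Applying the rank-gap argument in the $\xi$-variables gives $C^{(0,q)}\equiv 0$ for all $q$; symmetrically $C^{(p,0)}\equiv 0$; and an induction on $p+q\ge 2$ disposes of the mixed bi-degrees, yielding $C\equiv 0$ and hence the lemma.

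I expect the main obstacle to be the clean execution of the mixed-bi-degree induction: at each order, the various Taylor coefficients of $C$, together with the higher-order parts of the $a_i$ and $B_i$, couple together, and one has to isolate a tuple of exactly $n-1$ elements in $\mathrm{span}\{a_i^{(p+1)}\}$ whose $\CC$-linear independence is guaranteed whenever $C^{(p,q)}\ne 0$. Throughout, the decisive and uniform input is the strict inequality $k\le n-2<n-1$, which is precisely the hypothesis of the lemma and provides the one-dimensional ``rank gap'' at every stage.
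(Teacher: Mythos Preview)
The paper does not prove this lemma; it merely quotes it from [Hu1] and uses it as a black box throughout \S\S3--5, so there is no in-paper argument to compare yours against. Your polarization-plus-rank-gap strategy is indeed the standard route, and your lowest-order step is correct: after complexifying $\bar z\mapsto\xi$, the relations $\sum_i(\partial_{\xi_j}B_i)(0)\,a_i(z)=z_j\,C(z,0)$ place the $n-1$ independent germs $z_jC(z,0)$ inside the $\le k$-dimensional span of the $a_i$'s, forcing $C(z,0)=0$, and symmetrically $C(0,\xi)=0$.

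The gap is in your ``mixed bi-degree induction,'' and it is twofold. First, the bi-homogeneous equations
\[
\sum_{i=1}^k a_i^{(p+1)}(z)\,B_i^{(q+1)}(\xi)=C^{(p,q)}(z,\xi)\,\langle z,\xi\rangle
\]
are completely decoupled in $(p,q)$: no lower $C^{(p',q')}$ appears, so an induction on $p+q$ has no hypothesis to feed on. Second --- and this is where the sketch would actually fail --- by passing to the homogeneous pieces $a_i^{(p+1)}$ you have discarded the $\CC$-linear independence of the $a_i$'s (for a given $p$ the $a_i^{(p+1)}$ may be dependent or all zero), so the $n{-}1$-versus-$k$ count can no longer be run inside $\mathrm{span}\{a_i^{(p+1)}\}$ as you propose. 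The repair is to slice only in $\xi$ and keep the full $a_i(z)$: comparing $\xi$-degree $q{+}1$ gives $\sum_i a_i(z)\,B_i^{(q+1)}(\xi)=C^{[\cdot,q]}(z,\xi)\,\langle z,\xi\rangle$, and for each fixed $\xi_0$ restriction to $H_{\xi_0}=\{\langle z,\xi_0\rangle=0\}$ yields $\sum_i B_i^{(q+1)}(\xi_0)\,a_i|_{H_{\xi_0}}=0$. The missing lemma is that for \emph{generic} $\xi_0$ the restrictions $a_1|_{H_{\xi_0}},\ldots,a_k|_{H_{\xi_0}}$ remain independent; here is where $k\le n-2$ is genuinely used, via a dimension count (the incidence set $\{([\xi_0],[v]):\langle z,\xi_0\rangle\mid v\}\subset\mathbb{P}^{n-2}\times\mathbb{P}(\mathrm{span}\,a_i)$ has finite fibers over the second factor since a nonzero germ in the UFD $\CC\{z\}$ has finitely many linear factors, hence dimension $\le k-1\le n-3$, so its image in $\mathbb{P}^{n-2}$ is proper). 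With this in hand $B_i^{(q+1)}\equiv0$ and $C^{[\cdot,q]}\equiv0$ for every $q$ separately --- no induction required.
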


\section{ Analysis on the Chern-Moser equation}

Suppose now that
$F=(f,\phi,g)$ is a proper rational map from ${\HH}_n$ into
${\HH}_N$, and satisfies the normalization as in Theorem \ref{thm3}
with $1\le \kappa_0\le n-2$.
Write the codimension part $\phi$ of the map $F$ as
$\phi:=(\Phi_0,\Phi_1)$ with $\Phi_0=(\phi_{\ell k})_{(\ell , k)\in
{\cal S}_0}$ and $\Phi_1=(\phi_{\ell k})_{(\ell, k)\in {\cal S}_1}$.
Write
\[
\Phi_0^{(1,1)}(z)= \sum_{j=1}^{\kappa_0} {e_j} z_j,\ \
\Phi_1^{(1,1)}(z)= \sum_{j=1}^{\kappa_0} \hat{e}_j z_j,
\]
with
{${e_j}$}  $\in {\CC}^{\#({\cal S}_0)}={\CC}^{\kappa_0n-\frac{\kappa_0(\kappa_0+1)}{2}}$,
 $\hat{e}_j \in \CC^{\#({\cal S}_1)}$,
$\xi_j(z) = \overline{{e}_j} \cdot \Phi^{(2,0)}_0(z)$, and $\xi=(\xi_1,
..., \xi_{\kappa_0})$. We also
 write in the following:
\begin{equation*}\label{007}
\begin{split}
&\phi^{(1,1)}(z)w=\sum e_j^*z_jw,\ \hbox{with}\ e_j^*=(e_j,\hat{e}_j),\\
&H=\sum_{(i_1,\cdots,i_{n-1},i_n)}
H^{(i_1,\cdots,i_n)}z_1^{i_1}\cdots
z_{n-1}^{i_{n-1}}w^{i_n}=\sum_{k,j=0}^{\infty} H^{(k,j)}(z)w^j\
\text{for}\ H=f\ \text{or}\ \phi.
\end{split}
\end{equation*}
Here $H^{(k,j)}(z)$ is a homogeneous polynomial of degree $k$ in
$z$.

In this section, we demonstrate our basic idea of the proof through an easier case.
 We proceed with the following lemma, that will be used later:

\begin{lemma}\label{product}
  Let $(\Gamma_{j}^{[h]}(z))_{1\leq j\leq \kappa_0,h=1,2}$ be some
  holomorphic functions of $z$. Let $\mu_{jl}$ and  $\mu_j$ be as in Theorem \ref{thm3}. Suppose that
   for $h=1,2$,  $(\Lambda_{j\ell}^{[h]})_{(j,\ell)\in
  \mathcal{S}_0}$  are defined as follows:
  \begin{enumerate}
\item $\mu_{j \ell} \Lambda^{[h]}_{j \ell}(z) = 2i (z_j \Gamma^{[h]}_\ell + z_\ell
\Gamma^{[h]}_j),\ \ j<\ell \le \kappa_0$,
\item  $\mu_{j j } \Lambda^{[h]}_{j j}(z) = 2i z_j  \Gamma^{[h]}_j (z),\ \ j\le
\kappa_0$,
\item $\mu_{j \ell} \Lambda^{[h]}_{j \ell} = 2i z_\ell \Gamma^{[h]}_j(z),\ \ \ j\le
\kappa_0 < \ell$.
\end{enumerate}
Then we have
\begin{equation}\begin{split}
\sum\limits_{(j,\ell)\in \mathcal{S}_0}\ov{\Lambda^{[1]}_{j
\ell}}\Lambda^{[2]}_{j \ell} =&4|z|^2\Big(\sum_{j\le \kappa_0}
\frac{1}{\mu_j} \ov{\Gamma^{[1]}_j}\Gamma^{[2]}_j \Big)
        -
        \sum_{j < \ell \le \kappa_0}
        \frac{4}{\mu_j \mu_\ell(\mu_j+\mu_\ell)}
        \big(\mu_j\ov{z_j}\ov{\Gamma^{[1]}_\ell}-\mu_\ell\ov{z_\ell}\ov{\Gamma^{[1]}_j}\big)\\
&\cdot \big(\mu_jz_j\Gamma^{[2]}_\ell-\mu_\ell
z_\ell\Gamma^{[2]}_j\big).
\end{split}\end{equation}
\end{lemma}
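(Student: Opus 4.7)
The result is a purely algebraic identity, so the plan is a direct expansion of $\sum_{(j,\ell)\in \mathcal{S}_0}\ov{\Lambda^{[1]}_{j\ell}}\Lambda^{[2]}_{j\ell}$ according to the three cases defining $\Lambda^{[h]}_{j\ell}$, followed by a term-by-term comparison with the right-hand side.

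First I would split $\mathcal{S}_0$ into the three blocks of its definition: the diagonal $\{j=\ell\le\kappa_0\}$, the off-diagonal square $\{j<\ell\le\kappa_0\}$, and the mixed rectangle $\{j\le\kappa_0<\ell\le n-1\}$. Using $\mu_{j\ell}^2=\mu_j$ in the diagonal and mixed cases and $\mu_{j\ell}^2=\mu_j+\mu_\ell$ in the off-diagonal case, the diagonal and mixed blocks together contribute
$$\sum_{j\le\kappa_0}\frac{4\,\ov{\Gamma^{[1]}_j}\,\Gamma^{[2]}_j}{\mu_j}\Bigl(|z_j|^2+\sum_{\ell=\kappa_0+1}^{n-1}|z_\ell|^2\Bigr),$$
while the off-diagonal block contributes
$$\sum_{j<\ell\le\kappa_0}\frac{4}{\mu_j+\mu_\ell}\bigl(\bar z_j\ov{\Gamma^{[1]}_\ell}+\bar z_\ell\ov{\Gamma^{[1]}_j}\bigr)\bigl(z_j\Gamma^{[2]}_\ell+z_\ell\Gamma^{[2]}_j\bigr).$$

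To match the sum of these two pieces against $4|z|^2\sum_{j\le\kappa_0}\mu_j^{-1}\ov{\Gamma^{[1]}_j}\Gamma^{[2]}_j$ minus the correction term on the right-hand side, I would use $|z|^2=\sum_{k\le\kappa_0}|z_k|^2+\sum_{\ell>\kappa_0}|z_\ell|^2$: the contributions from $k=j$ and $k>\kappa_0$ are exactly absorbed by the diagonal/mixed block. What remains to verify is the per-pair identity, for each $j<\ell\le\kappa_0$,
\begin{align*}
\frac{(\bar z_j\ov{\Gamma^{[1]}_\ell}+\bar z_\ell\ov{\Gamma^{[1]}_j})(z_j\Gamma^{[2]}_\ell+z_\ell\Gamma^{[2]}_j)}{\mu_j+\mu_\ell}
&+\frac{(\mu_j\bar z_j\ov{\Gamma^{[1]}_\ell}-\mu_\ell\bar z_\ell\ov{\Gamma^{[1]}_j})(\mu_j z_j\Gamma^{[2]}_\ell-\mu_\ell z_\ell\Gamma^{[2]}_j)}{\mu_j\mu_\ell(\mu_j+\mu_\ell)}\\
&=\frac{|z_j|^2\,\ov{\Gamma^{[1]}_\ell}\Gamma^{[2]}_\ell}{\mu_\ell}+\frac{|z_\ell|^2\,\ov{\Gamma^{[1]}_j}\Gamma^{[2]}_j}{\mu_j}.
\end{align*}

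Expanding both fractions on the left, the cross-terms in $\bar z_j z_\ell\ov{\Gamma^{[1]}_\ell}\Gamma^{[2]}_j$ and $\bar z_\ell z_j\ov{\Gamma^{[1]}_j}\Gamma^{[2]}_\ell$ carry opposite signs and cancel, while the diagonal monomials are recombined by the partial-fraction identity $\frac{1}{\mu_j+\mu_\ell}+\frac{\mu_j}{\mu_\ell(\mu_j+\mu_\ell)}=\frac{1}{\mu_\ell}$ (and its analogue with $j$ and $\ell$ interchanged). I do not foresee any conceptual obstacle; the only real risk is bookkeeping — making sure the right power of $\mu_j$ or $\mu_\ell$ ends up in the numerator versus the denominator after cross-multiplication — which I would manage by performing the check monomial-by-monomial on each of the four types $|z_j|^2$, $|z_\ell|^2$, $\bar z_j z_\ell$, $\bar z_\ell z_j$.
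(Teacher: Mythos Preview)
Your proposal is correct and follows essentially the same approach as the paper: both split $\mathcal{S}_0$ into the diagonal, off-diagonal, and mixed blocks, use the relations $\mu_{jj}^2=\mu_j$, $\mu_{j\ell}^2=\mu_j$ (for $j\le\kappa_0<\ell$), $\mu_{j\ell}^2=\mu_j+\mu_\ell$ (for $j<\ell\le\kappa_0$), and then reduce the comparison to a per-pair elementary identity. Your partial-fraction verification is exactly the content of the paper's ``elementary identity'' displayed at the end of its proof.
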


\begin{proof}
Making use of  the formulas  between $\mu_{j \ell}$ and $\mu_j,\
\mu_\ell$ in Theorem \ref{thm3}, we get, from a straightforward
computation, the following:
\begin{equation*}\begin{split}
\frac{1}{4} \sum\limits_{(j,\ell)\in
\mathcal{S}_0}\ov{\Lambda^{[1]}_{j \ell}}\Lambda^{[2]}_{j \ell}
=&\sum\limits_{1\leq j\leq
\kappa_0}\frac{|z_j|^2}{\mu_j}\ov{\Gamma^{[1]}_{j}}\Gamma^{[2]}_{j
}+\sum\limits_{ j\leq
\kappa_0<\ell}\frac{|z_\ell|^2}{\mu_j}\ov{\Gamma^{[1]}_{j}}\Gamma^{[2]}_{j}\\
&+\sum_{j < \ell \le \kappa_0} \frac{1}{\mu_j+\mu_\ell} \ov{(z_j
\Gamma^{[1]}_\ell + z_\ell \Gamma^{[1]}_j)}\cdot (z_j \Gamma^{[2]}_\ell + z_\ell \Gamma^{[2]}_j)\\
=&\Big( \sum_{j\le \kappa_0} \frac{1}{\mu_j}\ov{\Gamma^{[1]}_{j
}}\Gamma^{[2]}_{j } \Big)\vert z\vert^2 - \underset{\ell \le
\kappa_0, \ell \not=j \le \kappa_0}{\sum} \frac{1}{\mu_j} |z_\ell|^2
\ov{\Gamma^{[1]}_{j}}\Gamma^{[2]}_{j}\\
&+ \sum_{j < \ell \le \kappa_0} \frac{1}{\mu_j+\mu_\ell}
\ov{(z_j\Gamma^{[1]}_{\ell}+z_\ell\Gamma^{[1]}_{j})}\cdot
(z_j\Gamma^{[2]}_{\ell}+z_\ell\Gamma^{[2]}_{j})).
\end{split}\end{equation*}

Now the lemma follows from  the following elementary identity:
\begin{equation*}\begin{split}
& \frac{\mu_j}{\mu_\ell} |z_j
|^2\ov{\Gamma^{[1]}_{\ell}}\Gamma^{[2]}_{\ell} +
\frac{\mu_\ell}{\mu_j} |z_\ell|^2
\ov{\Gamma^{[1]}_{j}}\Gamma^{[2]}_{j} - z_j\Gamma^{[2]}_\ell
\ov{\Gamma^{[1]}_j}\ov{z_\ell} - \ov{z_j}\ov{\Gamma^{[1]}_\ell}
\Gamma^{[2]}_j z_\ell \\& =\frac{1}{\mu_j\mu_\ell}
        \big(\mu_j\ov{z_j}\ov{\Gamma^{[1]}_\ell}-\mu_\ell\ov{z_\ell}\ov{\Gamma^{[1]}_j}\big)
        \cdot \big(\mu_jz_j\Gamma^{[2]}_\ell-\mu_\ell
z_\ell\Gamma^{[2]}_j\big).
\end{split}\end{equation*}  \end{proof}

Next we derive the following formula:

\begin{lemma}\label{30}
\begin{equation}
\label{(2)} \frac{1}{4} \vert \Phi^{(3,0)}_0(z)\vert^2 =
\Big(\sum_{j\le \kappa_0} \frac{1}{\mu_j} |\xi_j(z)|^2 \Big) |z|^2 -
\sum_{j < \ell \le \kappa_0} \frac{1}{\mu_j + \mu_\ell} \Big|
\sqrt{\frac{\mu_j}{\mu_\ell}} z_j \xi_\ell -
\sqrt{\frac{\mu_\ell}{\mu_j}} z_\ell \xi_j\Big|^2.
\end{equation}
\end{lemma}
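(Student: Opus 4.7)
The plan is to use the Chern--Moser functional equation on $\partial\HH_n$ to derive explicit formulas for each component $\phi_{j\ell}^{(3,0)}$ of $\Phi_0^{(3,0)}$ that match exactly the $\Lambda_{j\ell}$ appearing in Lemma~\ref{product}, with the choice $\Gamma^{[h]}_j = \xi_j$; the stated identity then follows by direct application of Lemma~\ref{product}.

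First I would complexify the boundary equation $|z|^2 = |f|^2 + |\phi|^2$ on $\partial \HH_n$ by substituting $w = \tau + 2i\langle z,\chi\rangle$. Since $g = w$, this yields the polynomial identity
\begin{equation*}
\langle z, \chi\rangle = f(z, \tau + 2i\langle z, \chi\rangle) \cdot \bar f(\chi, \tau) + \phi(z, \tau + 2i\langle z, \chi\rangle) \cdot \bar\phi(\chi, \tau)
\end{equation*}
in $(z, \chi, \tau)$. Under the normalization of Theorem~\ref{thm3}, one has $f^{(k,0)} = 0$ for $k \neq 1$, $\phi^{(k,0)} = 0$ for $k \leq 1$, $\Phi_1^{(2,0)} = 0$ (since $\phi_{3k} = O_{wt}(3)$), and $\phi_{lk}^{(2,0)}(z) = \mu_{lk} z_l z_k$ for $(l,k) \in \mathcal{S}_0$. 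Extracting the $(z,\chi)$-bidegree $(2,3)$ component at $\tau = 0$, only the two terms $\phi^{(2,0)}(z)\cdot\overline{\phi^{(3,0)}}(\chi)$ and $\phi^{(1,1)}(z)\cdot(2i\langle z,\chi\rangle)\cdot\overline{\phi^{(2,0)}}(\chi)$ survive; using $\Phi_0^{(1,1)}(z) = \sum_j e_j z_j$ and the definition $\xi_j(z) = \overline{e_j}\cdot\Phi_0^{(2,0)}(z)$, the second contribution collapses to $2i\langle z,\chi\rangle\sum_j z_j\,\overline{\xi_j}(\chi)$, so
\begin{equation*}
\Phi_0^{(2,0)}(z)\cdot\overline{\Phi_0^{(3,0)}}(\chi) = -2i\langle z,\chi\rangle\sum_{j\le\kappa_0}z_j\,\overline{\xi_j}(\chi).
\end{equation*}
Conjugating this polynomial identity coefficient-wise (which fixes $\Phi_0^{(2,0)}$ since the $\mu_{lk}$ are real) and then swapping $z$ and $\chi$ produces the key identity
\begin{equation*}
\Phi_0^{(3,0)}(z)\cdot\overline{\Phi_0^{(2,0)}}(\chi) = 2i\langle z,\chi\rangle\sum_{j\le\kappa_0}\xi_j(z)\,\chi_j. \tag{$\star$}
\end{equation*}

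Next I would expand the LHS of $(\star)$ as $\sum_{(l,k)\in\mathcal{S}_0}\mu_{lk}\phi_{lk}^{(3,0)}(z)\chi_l\chi_k$ (using $\overline{\phi_{lk}^{(2,0)}}(\chi) = \mu_{lk}\chi_l\chi_k$) and distribute the RHS as a quadratic form in $\chi$. Matching the coefficient of each monomial $\chi_l\chi_k$ with $(l,k)\in\mathcal{S}_0$ yields $\mu_{jj}\phi_{jj}^{(3,0)} = 2iz_j\xi_j$ (for $j\le\kappa_0$), $\mu_{j\ell}\phi_{j\ell}^{(3,0)} = 2i(z_j\xi_\ell + z_\ell\xi_j)$ (for $j<\ell\le\kappa_0$), and $\mu_{j\ell}\phi_{j\ell}^{(3,0)} = 2iz_\ell\xi_j$ (for $j\le\kappa_0<\ell$); these are precisely the defining relations for $\Lambda_{j\ell}$ in Lemma~\ref{product} with $\Gamma^{[h]}_j = \xi_j$. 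Thus $\phi_{j\ell}^{(3,0)} = \Lambda_{j\ell}$, and applying Lemma~\ref{product} with $\Gamma^{[1]}_j = \Gamma^{[2]}_j = \xi_j$, dividing by $4$, and rewriting $\frac{1}{\mu_j\mu_\ell(\mu_j+\mu_\ell)}|\mu_jz_j\xi_\ell - \mu_\ell z_\ell\xi_j|^2$ as $\frac{1}{\mu_j+\mu_\ell}\bigl|\sqrt{\mu_j/\mu_\ell}\,z_j\xi_\ell - \sqrt{\mu_\ell/\mu_j}\,z_\ell\xi_j\bigr|^2$ yields the desired identity. The principal subtlety is the passage to $(\star)$: it requires careful handling of polarization and conjugation, and relies on the reality of $\mu_{lk}$ so that $\Phi_0^{(2,0)}$ coincides with its polarized conjugate, which is what makes the clean coefficient matching possible.
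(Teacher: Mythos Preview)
Your proof is correct and follows essentially the same route as the paper: both derive the componentwise formulas $\mu_{j\ell}\phi_{j\ell}^{(3,0)}=2i(z_j\xi_\ell+z_\ell\xi_j)$, etc., from the weighted-degree-$5$ part of the Chern--Moser equation and then invoke Lemma~\ref{product}. The only difference is organizational: the paper works with the real parametrization $w=u+i|z|^2$, extracts separately the $\bar z^{1}z^{2}u$ and $z^{3}\bar z^{2}$ pieces (equations (\ref{hh}) and the one yielding (\ref{(3)'})), and combines them to eliminate the $f^{(2,1)}$ contribution; your polarized extraction at $\tau=0$ and bidegree $(2,3)$ kills that contribution automatically (since $\bar f(\chi,0)=\chi+O(\chi^5)$ and $f^{(0,2)}=0$), reaching $(\star)$ in one step.
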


\begin{proof} Since
\begin{equation}
\label{*}
-\text{Im}\big(g(z,w)\big)+\big|f(z,w)\big|^2+\big|\phi(z,w)\big|^2=0\
\ \ \hbox{over}\ \ \ \text{Im}(w)=|z|^2,
\end{equation}
we can consider terms of weighted degree $5$ to get,  over
Im$(w)=|z|^2$, the following
\begin{equation*}
z \ov{f^{(4)}(z,w)}+ \ov z f^{(4)}(z,w) + \Phi^{(2)}_0(z,w)
\ov{\Phi^{(3)}_0(z,w)} + \Phi^{(3)}_0(z,w)
\ov{\Phi^{(2)}_0(z,w)}=0,\  \hbox{or}
\end{equation*}
\begin{equation}\begin{split}
&z \ov{f^{(2,1)}(z)(u+i|z|^2)} + \ov z f^{(2,1)}(z)(u+i |z|^2) +
\Phi^{(2)}_0(z) \ov{\bigg(\Phi^{(3,0)}_0(z)+(\sum e_j z_j)w\bigg)}\\
&+ \bigg( \Phi^{(3,0)}_0(z)+(\sum e_j z_j) w\bigg)
\ov{\Phi^{(2)}_0(z)}\equiv 0.
\end{split}\end{equation}
Here, we know $f^{(4)}(z,w)=f^{(2,1)}(z)w$ by the above mentioned
normalization. Collecting terms of the form $\ov z^\alpha z^\beta u$
with $|\alpha|=1$, $|\beta|=2$, we get
\[
\ov z f^{(2,1)}(z) + \Phi^{(2)}_0(z) \ov{\sum e_j z_j} = 0, \ \ or,
\ \
\]
\begin{equation}\label{hh}
\ov z f^{(2,1)}(z) = - \ov{\text{{$(z_1, ..., z_{\kappa_0})$}}}\cdot
\xi(z).
\end{equation}
Collecting terms of the form $z^\alpha \ov z^\beta$ with
$|\alpha|=3$ and $|\beta|=2$, we get
\begin{equation}
i \ov z f^{(2,1)}(z) |z|^2 +\ov{\Phi^{(2)}_0(z)}{ \Phi^{(3,0)}_0(z)}
+ {\Phi^{(2)}_0(z)} \ov{\sum^{\kappa_0}_{j=1} e_j z_j  (i |z|^2)}
\equiv 0.
\end{equation}
We thus get
\begin{equation}
\label{(3)'} \ov{\Phi^{(2)}_0 (z)}{ \Phi^{(3,0)}_0(z)}=2 i \ov
{(z_1,\cdots,z_{\kappa_0})}\cdot \xi(z) |z|^2.
\end{equation}
Equivalently,  we have
\begin{enumerate}
\item $\mu_{j \ell} \phi^{(3, 0)}_{j \ell}(z) = 2i (z_j \xi_\ell(z) + z_\ell
\xi_j(z)),\ \ j<\ell \le \kappa_0$,
\item  $\mu_{j j } \phi^{(3, 0)}_{j j}(z) = 2i z_j  \xi_j (z),\ \ j\le
\kappa_0$,
\item $\mu_{j \ell} \phi^{(3, 0)}_{j \ell}(z) = 2i z_\ell \xi_j(z),\ \ \ j\le
\kappa_0 < \ell$.
\end{enumerate}
Now Lemma \ref{30} follows from Lemma \ref{product}.\end{proof}

\medskip
\begin{lemma}
\label{lemma 2} $\vert \phi^{(3,0)}(z) \vert^2$ $  = A(z, \ov
z)\vert z\vert^2$ with $A(z, \ov z)$ a real analytic polynomial  in
$(z,\ov{z})$.
\end{lemma}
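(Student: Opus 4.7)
The plan is to extract $|\phi^{(3,0)}|^2$ from the weight-$6$ part of the Chern--Moser equation $(*)$, namely $-\text{Im}(g)+|f|^2+|\phi|^2 = 0$ on $\text{Im}(w)=|z|^2$. Whereas Lemma~\ref{30} drew its identity from weight~$5$, here we pass to weight~$6$, restrict to the $u^0$-coefficient (equivalently, set $w=i|z|^2$), and then match the result by bidegree in $(z,\bar z)$.

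Substituting $w=u+i|z|^2$ into $(*)$ and using the normalizations of Theorem~\ref{thm3}---in particular $\phi(0,w)\equiv 0$ (so $\phi^{(0,2)}=0$), $f(0,w)\equiv 0$, and $g=w+o_{wt}(4)$---the weight-$6$ coefficient of $u^0$ is a finite sum of contributions from $|\phi^{(3)}|^2$, $2\text{Re}[\phi^{(2)}\overline{\phi^{(4)}}]$, $|f^{(3)}|^2$, $2\text{Re}[z\overline{f^{(5)}}]$, and $-\text{Im}[g^{(6)}]$, each evaluated on $w = i|z|^2$. Decomposing by bidegree in $(z,\bar z)$, the crucial observation is that $|\phi^{(3,0)}|^2$ is the only bidegree-$(3,3)$ contribution not already a manifest multiple of $|z|^2$: $2\text{Re}[\phi^{(2)}\overline{\phi^{(4,0)}}]$ lives in bidegrees $(2,4)$ and $(4,2)$; $2\text{Re}[\bar z f^{(5,0)}(z)]$ in $(5,1)$ and $(1,5)$; and $-\text{Im}[g^{(6,0)}(z)]$ in $(6,0)$ and $(0,6)$. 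The remaining bidegree-$(3,3)$ pieces---namely $|z|^4|\phi^{(1,1)}|^2$, $2|z|^2\text{Im}[\phi^{(2)}\overline{\phi^{(2,1)}}]$, $\tfrac{1}{4}|a(z)|^2|z|^4$, $-|z|^4\bigl(\bar z f^{(1,2)}+z\overline{f^{(1,2)}}\bigr)$, and $|z|^6\text{Re}[g^{(0,3)}]$---each carry an explicit factor of $|z|^2$.

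Since multiplication by $|z|^2$ preserves the bidegree grading, extracting the bidegree-$(3,3)$ component of the weight-$6$ identity gives $|\phi^{(3,0)}|^2 \equiv 0 \pmod{|z|^2}$, so $|\phi^{(3,0)}|^2 = A(z,\bar z)|z|^2$ with $A$ a real-analytic polynomial (in fact of bidegree $(2,2)$) in $(z,\bar z)$. The main obstacle is just the careful enumeration of bidegree-$(3,3)$ contributions, which is made clean by Theorem~\ref{thm3}: $\phi^{(2)}=\Phi_0^{(2)}$ is pure in $z$ (with $\Phi_1^{(2)}=0$) and $\Phi_1$ vanishes to weight~$3$, so no unexpected bidegree-$(3,3)$ term appears.
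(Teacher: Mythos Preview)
Your proof is correct and follows essentially the same route as the paper: extract the weight-$6$ part of the basic equation $(*)$ and isolate the bidegree-$(3,3)$ terms, which is exactly what the paper does (its forward reference to equation~(\ref{330}) is precisely this computation). One small simplification: under Theorem~\ref{thm3} the normalization is $g=w$ \emph{exactly}, not merely $g=w+o_{wt}(4)$, so the $g^{(6,0)}$ and $g^{(0,3)}$ contributions you track are in fact identically zero and need not be accounted for.
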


\noindent{\it Proof:}\ \ \ \ \ Collecting terms of weighted degree
$6$ in (\ref{*}), we get
\begin{equation}\begin{split}
&z \ov{f^{(5)}(z,w)} + \ov z f^{(5)}(z,w) + \Phi^{(2)}_0(z,w)\cdot
\ov{\Phi^{(4)}_0(z,w)} + \ov{\Phi^{(2)}_0(z,w)}\cdot \Phi^{(4)}_0(z,w)\\
& + | \phi^{(3)}(z,w)|^2 + |f^{(3)}(z,w)|^2=0\ \ \hbox{over}\
\text{Im}(w)=|z|^2.
\end{split}\end{equation}
Collecting terms of the form $z^\alpha \ov
z^\beta$ with $|\alpha|=|\beta|=3$ and applying the normalization
for $F$, we easily see  the proof (cf., (\ref{330}) below) . \ \
$\Box$

\medskip

Notice that $\vert \phi^{(3,0)}(z) \vert^2$ $=\vert
\Phi_0^{(3,0)}(z) \vert^2 +\vert \Phi_1^{(3,0)}(z) \vert^2$ and
there are $\frac{\kappa_0(\kappa_0+1)}{2}-\kappa_0$ negative terms
in the right hand side of (\ref{(2)}). Also there are
$(N-(\kappa_0+1)n+\frac{\kappa_0(\kappa_0+1)}{2})$ components in
$\Phi_1$. Applying  Lemma \ref{huanglemma} and [Proposition 3 on
page 102 of \cite{DA}], we immediately get, after applying a unitary
transformation to the $\Phi_1$-components, the following:

\medskip
\begin{Corollary}\label{hjy1}
\label{lemma 3} Suppose that $\kappa_0\ge 2$ and
{$(\kappa_0+1)n-\kappa_0\le$} $N\le (\kappa_0+2)n -
\kappa_0(\kappa_0+1)+\kappa_0-2$. Then
\begin{equation}\begin{split}
&\Phi^{(3,0)}_1(z)=\Big
(\frac{2}{\sqrt{\mu_j+\mu_l}}\big(\sqrt{\frac{\mu_j}{\mu_\ell}} z_j
\xi_\ell -\sqrt{ \frac{\mu_\ell}{\mu_j}} z_\ell \xi_j\big),
0'\Big)_{1\le j  < l \le \kappa_0}, \\
&  \vert \phi^{(3,0)}(z)\vert^2 = 4\Big(\sum_{j\le \kappa_0}
\frac{1}{\mu_j} |\xi_j(z)|^2 \Big) |z|^2.
\end{split}\end{equation}
\end{Corollary}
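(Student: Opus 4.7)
The plan is to extract the asserted decomposition of $\Phi_1^{(3,0)}$ by comparing the two pieces of information we already possess about the cubic part $\phi^{(3,0)}$: Lemma \ref{30}, which gives an explicit formula for $|\Phi_0^{(3,0)}|^2$, and Lemma \ref{lemma 2}, which asserts $|\phi^{(3,0)}|^2=A(z,\ov z)|z|^2$ for some real-analytic polynomial $A$. Writing $\eta_{j\ell}(z):=\tfrac{2}{\sqrt{\mu_j+\mu_\ell}}\bigl(\sqrt{\mu_j/\mu_\ell}\,z_j\xi_\ell-\sqrt{\mu_\ell/\mu_j}\,z_\ell\xi_j\bigr)$ for $1\le j<\ell\le\kappa_0$, Lemma \ref{30} rearranges as $|\Phi_0^{(3,0)}|^2+\sum_{j<\ell\le\kappa_0}|\eta_{j\ell}|^2=4\bigl(\sum_{j\le\kappa_0}\mu_j^{-1}|\xi_j|^2\bigr)|z|^2$. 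Subtracting this from the evident identity $|\Phi_0^{(3,0)}|^2+|\Phi_1^{(3,0)}|^2=A|z|^2$ yields
\[
|\Phi_1^{(3,0)}(z)|^2-\sum_{j<\ell\le\kappa_0}|\eta_{j\ell}(z)|^2\;=\;\Bigl(A(z,\ov z)-4\sum_{j\le\kappa_0}\mu_j^{-1}|\xi_j(z)|^2\Bigr)|z|^2=:B(z,\ov z)|z|^2.
\]

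The next step is to count the holomorphic ingredients on the left: $|{\cal S}_1|=N-n-(2n-\kappa_0-1)\kappa_0/2$ components of $\Phi_1^{(3,0)}$ together with $\binom{\kappa_0}{2}$ functions $\eta_{j\ell}$. A routine arithmetic check shows that the hypothesis $N\le(\kappa_0+2)n-\kappa_0(\kappa_0+1)+\kappa_0-2$ is equivalent to $|{\cal S}_1|+\binom{\kappa_0}{2}\le n-2$. Rewriting each subtracted square $|\eta_{j\ell}|^2=\eta_{j\ell}\cdot\overline{(-\eta_{j\ell})}\cdot(-1)$—that is, absorbing the sign into the $\overline{b}$ slot—the identity becomes $\sum_{i=1}^{k}a_i(z)\overline{b_i(z)}=B(z,\ov z)|z|^2$ with $k\le n-2$ and each $a_i(0)=b_i(0)=0$. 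Huang's Lemma \ref{huanglemma} then yields two conclusions simultaneously: $B\equiv 0$, which is exactly the second assertion $|\phi^{(3,0)}|^2=4\bigl(\sum_{j\le\kappa_0}\mu_j^{-1}|\xi_j|^2\bigr)|z|^2$, and the paired sum itself also vanishes, giving the Hermitian identity $|\Phi_1^{(3,0)}(z)|^2=\sum_{j<\ell\le\kappa_0}|\eta_{j\ell}(z)|^2$.

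To conclude, I would apply the D'Angelo lemma on Hermitian sums of squares: an equality $\sum|h_i|^2=\sum|g_\alpha|^2$ of such sums of holomorphic polynomials forces one tuple to be obtained from the other by a unitary transformation together with zero-padding. Since such a unitary acting only on the $\Phi_1$-components can be realized by an element of $\text{Aut}_0(\HH_N)$ that leaves $f,g,\Phi_0$ fixed and hence preserves the normal form of Theorem \ref{thm3}, the rotation may be absorbed into the already-fixed normalization without disturbing anything. After this rotation, $\Phi_1^{(3,0)}$ takes exactly the asserted form $\bigl(\eta_{12},\ldots,\eta_{\kappa_0-1,\kappa_0},0'\bigr)$.

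The main obstacle is twofold. First, the accounting that turns the assumption on $N$ into precisely $|{\cal S}_1|+\binom{\kappa_0}{2}\le n-2$ is what pins down the sharp shape of the hypothesis and must be matched exactly to make Huang's Lemma bite. Second, one must carefully check that the D'Angelo rotation, once lifted to an automorphism of $\HH_N$, is compatible with the Huang--Ji--Xu normalization of Theorem \ref{thm3}; that is, that absorbing it does not spoil the structural constraints already imposed on $f$, $\Phi_0$, and the leading-weight behavior of $\Phi_1$, so that the final formula genuinely holds in the original normalized frame.
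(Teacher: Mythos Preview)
Your proposal is correct and follows essentially the same route as the paper. The paper's own argument is very terse---it simply observes that the right-hand side of Lemma~\ref{30} has $\binom{\kappa_0}{2}$ negative terms, that $\Phi_1$ has $N-(\kappa_0+1)n+\tfrac{\kappa_0(\kappa_0+1)}{2}$ components, and that Lemma~\ref{huanglemma} together with the D'Angelo lemma (after a rotation) then gives the result---and you have spelled out precisely those steps, including the arithmetic that matches the hypothesis on $N$ to the bound $k\le n-2$ and the verification that the rotation in the $\Phi_1$-block preserves the normal form of Theorem~\ref{thm3}.
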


Here we notice that the condition $N\ge (\kappa_0+1)n-\kappa_0$
implies that $N>\#({\cal S}_0)+n$ for $\kappa_0>1$.

\section{ Partial linearity and further applications
of the Chern-Moser equation}
We  assume in this section that
the proper rational  map $F=(f,\phi,g)$ from ${\HH}_n$ into
${\HH}_{N}$ satisfies the normalization as in Theorem \ref{thm3}
with $\kappa_0=2$. Moreover, by what is done in the last section, we
assume that $\Phi^{(3,0)}_1(z)$ has been normalized to take the form
as in Corollary \ref{hjy1}. Namely, the only possible non-zero
element in $\Phi^{(3,0)}_1(z) $ is $\phi_{33}^{(3,0)}(z) $.

In this section, we prove the following result, which will be
crucial for our proof of Theorem \ref{thm1}:

\begin{theorem} \label{prop11} Assume that $F$ is as in Theorem \ref{thm3} with
$\kappa_0=2$, {$n\ge 7$} and {$3n-2\le $} $N\le 4n-6$. Also, assume
that $\Phi_1^{(3,0)}(z)$ is normalized as in Corollary \ref{hjy1}.
Then the following holds:

\noindent (1):
$\Phi_1^{(4,0)}(z)=(\phi_{33}^{(4,0)}(z),0,\cdots,0)$, where
\[
\phi^{(4,0)}_{33}(z)=\frac{ 2 }{\sqrt{\mu_1+\mu_2}}
\bigg(\sqrt{\frac{\mu_1}{\mu_2}}z_1 \eta_2^*  -
\sqrt{\frac{\mu_2}{\mu_1}} z_2 \eta_1^* \bigg),\ \ \eta_1^* =
\phi^{(3,0)}(z)\cdot \ov{e_1^*},\ \ \eta_2^* = \phi^{(3,0)}(z)\cdot
\ov{e_2^*}.
\]

\noindent (2): $D^{\alpha}_z\Phi_1^{(2,1)}(z)\in \hbox{span}
\{(1,0,\cdots,0), \hat{e}_1,\hat{e}_2\}$ for $|\alpha|=2$.

\noindent (3): $D^{\alpha}_z\Phi_1^{(1,2)}(z)\in \hbox{span}
\{\hat{e}_1,\hat{e}_2\}$ for $|\alpha|=1$.

\noindent Here $\hat{e}_1, \hat{e}_2, e_1^*, e_2^*,\ $ are  defined
as at the beginning of the last section, and { $D$ is the regular
differential operator.}
\end{theorem}

This section is devoted to the proof of  Theorem \ref{prop11}.

\medskip
Notice that $g=w$.  By the partial linearity theorem of the first
author proved  in [Hu2], we can assume that for any
$\epsilon=(\epsilon_1,\epsilon_2)(\in \mathbb{C}^2)\approx 0$, there
is a unique affine subspace $L_\epsilon$ of codimension two  defined
by equations of the form:
\begin{equation}
  z_1=\sum\limits_{i=3}^{n-1}a_i(\epsilon)z_i+a_n(\epsilon)w+\epsilon_1,\
  z_2=\sum\limits_{i=3}^{n-1}b_i(\epsilon)z_i+b_n(\epsilon)w+\epsilon_2,\ a_i(0)=b_i(0)=0
\end{equation}
such that $F$ is a linear map on $L_{\epsilon}$. Here $a_j,b_j$ are
holomorphic functions in $\epsilon$ near $0$. Hence we have
$$
\frac{\p^2 H}{\p w^2}\Big|_{L_\epsilon}=0\ \text{for}\ H=f\
\text{or}\ \phi.
$$
Namely, for {$H(L_{\epsilon})=H\big(\sum^{n-1}_{i=3}a_i(\epsilon)z_i
+ a_n(\epsilon)w+\epsilon_1, \ \sum^{n-1}_{i=3}b_i(\epsilon)z_i +
b_n(\epsilon)w+\epsilon_2, z_3, ..., z_{n-1}, w\big)$}, we have {
\begin{equation}\begin{split}\label{w2}
 & 0=\frac{\p^2 H(L_{\epsilon})}{\p w^2}\Big|_{(\epsilon_1,\epsilon_2)}\\
 & =\Big(\frac{\p^2 H}{\p z_1^{2}}a_n^2+\frac{\p^2 H}{\p z_2^{2}}b_n^2
 + 2\frac{\p^2 H}{\p z_1 \p z_2}a_nb_n+2\frac{\p^2 H}{\p z_1\p w}a_n
 + 2\frac{\p^2 H}{\p z_2 \p w}b_n+\frac{\p^2 H}{\p w^{2}}\Big)
 \Big|_{(\epsilon_1,\epsilon_2, 0 ,..., 0)}.
 \end{split}
\end{equation}
}
Let $a_n^{(1)}(\epsilon)$ and $b_n^{(1)}(\epsilon)$ be the linear
parts in $a_n$ and $b_n$, respectively. Set $H=f_1,f_2$ and $\phi$
in (\ref{w2}), respectively. We then get
\begin{equation}\begin{split}
&\frac{i}{2}\mu_1a_n^{(1)}(\epsilon)+ f_1^{(1,2)}(\epsilon\text{{$,
0, ..., 0$}})=0,\
\frac{i}{2}\mu_2b_n^{(1)}(\epsilon)+ f_2^{(1,2)}(\epsilon\text{{$, 0, ..., 0$}})=0,\\
&\phi^{(1,2)}(\epsilon\text{{$, 0, ...,
0$}})+e^*_1a_n^{(1)}(\epsilon)+e^*_2b^{(1)}_n(\epsilon)=0.
\end{split}\end{equation}
Notice that by Theorem \ref{thm3}, $F^{(1,m)}(z)$ depends only on
$(z_1,z_2)$ for any $m$. It then follows:
\begin{equation}\begin{split}\label {eq1}
\phi^{(1,2)}(\epsilon\text{{$, 0, ...,
0$}})=-e_1^*a_n^{(1)}(\epsilon)-e_2^*b^{(1)}_n(\epsilon)
=-\frac{2i}{\mu_1}f_1^{(1,2)}(\epsilon\text{{$, 0, ...,
0$}})e^*_1-\frac{2i}{\mu_2}f_2^{(1,2)}(\epsilon\text{{$, 0, ...,
0$}})e_2^*.
\end{split}\end{equation}
This proves Theorem \ref{prop11} (3).  Moreover,  we obtain
\begin{equation}\begin{split}\label{eq01}
\ov{\Phi_0^{(1,2)}(z)}\cdot
\Phi_0^{(2,0)}(z)&=\frac{2i}{\mu_1}\ov{f_1^{(1,2)}(z)}\ov{e_1}\cdot
\Phi_0^{(2,0)}(z)
     +\frac{2i}{\mu_2}\ov{f_2^{(1,2)}(z)}\ov{e_2}\cdot
\Phi_0^{(2,0)}(z)\\
&=\frac{2i}{\mu_1}(\ov{f_1^{(I_1+2I_n)}}\ov{z_1}+\ov{f_1^{(I_2+2I_n)}}\ov{z_2})\xi_1
     +\frac{2i}{\mu_2}(\ov{f_2^{(I_1+2I_n)}}\ov{z_1}+\ov{f_2^{(I_2+2I_n)}}\ov{z_2})\xi_2.
\end{split}\end{equation}
{Here and in} what follows, write $I_j=(0,\cdots, 0,
1,0,\cdots,0)\in {\ZZ}^n$, where the non-zero element $1$ is in the
$j^{th}$-position. From (\ref{eq01}), we also have
\begin{equation}\begin{split}\label {eq2}
\ov{\Phi_0^{(I_1+2I_n)}}\cdot
\Phi_0^{(2,0)}(z)&=2i(\frac{\xi_1}{\mu_1}\ov{f_1^{(I_1+2I_n)}}+\frac{\xi_2}{\mu_2}\ov{f_2^{(I_1+2I_n)}})\\
\ov{\Phi_0^{(I_2+2I_n)}}\cdot
\Phi_0^{(2,0)}(z)&=2i(\frac{\xi_1}{\mu_1}\ov{f_1^{(I_2+2I_n)}}+\frac{\xi_2}{\mu_2}\ov{f_2^{(I_2+2I_n)}}),
\end{split}\end{equation}
and the following:
\begin{equation}\begin{split}\label{97eq1}
&2i\big(\frac{\ov{\xi_1}}{\mu_1}\ov{\Phi_0^{(I_1+2I_n)}}\cdot
\Phi_0^{(2,0)}(z)+\frac{\ov{\xi_2}}{\mu_2}\ov{\Phi_0^{(I_2+2I_n)}}\cdot\Phi_0^{(2,0)}(z)\big)\\
&=\frac{-4\ov{\xi_1}}{\mu_1}\cdot\big(\frac{\xi_1}{\mu_1}\ov{f_1^{(I_1+2I_n)}}
+\frac{\xi_2}{\mu_2}\ov{f_2^{(I_1+2I_n)}}\big)
+\frac{-4\ov{\xi_2}}{\mu_2}\big(\frac{\xi_1}{\mu_1}\ov{f_1^{(I_2+2I_n)}}+\frac{\xi_2}{\mu_2}\ov{f_2^{(I_2+2I_n)}}\big)\\
&=-4\frac{\xi_1}{\mu_1}
   \big(\ov{f_{1}^{(I_1+2I_n)}}\frac{\ov{\xi_1}}{\mu_1}+\ov{f_{1}^{(I_2+2I_n)}}\frac{\ov{\xi_2}}{\mu_2}\big)
   -4\frac{\xi_2}{\mu_2}
   \big(\ov{f_{2}^{(I_1+2I_n)}}\frac{\ov{\xi_1}}{\mu_1}+\ov{f_{2}^{(I_2+2I_n)}}\frac{\ov{\xi}_2}{\mu_2}\big).
\end{split}\end{equation}

Considering terms of weighted degree $6$  in the basic equation
(\ref{*}), we get
\begin{equation}\begin{split}
2\text{Re}\Big\{\ov{z}f^{(5)}(z,w)+\ov{\Phi_0^{(2)}(z,w)}\cdot{\Phi_0^{(4)}(z,w)}\Big\}
+\big|f^{(3)}(z,w)\big|^2+\big|\phi^{(3)}(z,w)\big|^2=0
\end{split}\end{equation}
over Im$(w)=|z|^2$.  Namely, we have
\begin{equation}\begin{split}
&2\text{Re}\Big\{\ov{z}\Big(f^{(3,1)}(z)(u+i|z|^2)+f^{(1,2)}(z)(u+i|z|^2)^2\Big)
+\ov{\Phi_0^{(2,0)}(z)}\Big(\Phi_0^{(4,0)}(z)
   +\Phi_0^{(2,1)}(z)\\
   &\ \ \cdot(u+i|z|^2)\Big)\Big\}
+\big|f^{(1,1)}(z)(u+i|z|^2)\big|^2+\big|\phi^{(3,0)}(z)+\phi^{(1,1)}(z)(u+i|z|^2)\big|^2=0.
\end{split}\end{equation}
{Here we notice that the $f^{(5,0)}(z)$ term is not involved (cf.
\cite{HJX1}, Lemma 2.3(A)).} Collecting terms of  the form
$z^{\a}\ov{z^{\b}}u^2$ with $|\a|=1,|\b|=1$, we get
\begin{equation}\begin{split}\label{112}
&2\text{Re}\big(\ov{z}f^{(1,2)}(z)\big)+|f^{(1,1)}(z)|^2+|\phi^{(1,1)}(z)|^2=0.
\end{split}\end{equation}
Collecting terms of  the form $z^{\a}\ov{z}^{\b}u$ with
$|\a|=3,|\b|=1$, we get
\begin{equation}\begin{split}\label{311}
&\ov{z}f^{(3,1)}(z)+\phi^{(3,0)}(z)\cdot\ov{\phi^{(1,1)}(z)}=0.
\end{split}\end{equation}
Collecting terms of  the form $z^{\a}\ov{z}^{\b}u$ with
$|\a|=2,|\b|=2$, we get
\begin{equation}\begin{split}\label{221}
&2\text{Re}\Big(2i\ov{z}f^{(1,2)}(z)|z|^2+\ov{\Phi_0^{(2,0)}(z)}\cdot
{\Phi_0^{(2,1)}(z)}\Big)=0.
\end{split}\end{equation}
Collecting terms of the form  $z^{\a}\ov{z}^{\b}$ with
$|\a|=4,|\b|=2$, we get
\begin{equation}\begin{split}\label{420}
&i|z|^2\ov{z}f^{(3,1)}(z)+\ov{\Phi_0^{(2,0)}(z)}\cdot{\Phi_0^{(4,0)}(z)}-i|z|^2\ov{\phi^{(1,1)}(z)}\cdot
{\phi^{(3,0)}(z)}=0.
\end{split}\end{equation}
Collecting terms of the form  $z^{\a}\ov{z}^{\b}$ with
$|\a|=3,|\b|=3$, we get
\begin{equation}\begin{split}\label{330}
&2\text{Re}\Big(-\ov{z}f^{(1,2)}(z)|z|^4+i|z|^2\ov{\Phi_0^{(2,0)}(z)}\cdot
{\Phi_0^{(2,1)}(z)}\Big)\\
&+|z|^4\cdot|f^{(1,1)}(z)|^2+|\phi^{(3,0)}(z)|^2+|z|^4\cdot|\phi^{(1,1)}(z)|^2=0.
\end{split}\end{equation}
Combining (\ref{311}) with (\ref{420}), we get
\begin{equation}\begin{split}\label{92eq1}
\ov{\Phi_0^{(2,0)}(z)}\cdot{\Phi_0^{(4,0)}(z)}=2i|z|^2\ov{\phi^{(1,1)}(z)}\cdot
{\phi^{(3,0)}(z)}.
\end{split}\end{equation}
Substituting (\ref{112}) into (\ref{330}), we get
\begin{equation}\begin{split}\label{92eq2}
&2\text{Re}\Big(-2\ov{z}f^{(1,2)}(z)|z|^2+i\ov{\Phi_0^{(2,0)}(z)}\cdot
{\Phi_0^{(2,1)}(z)}\Big)|z|^2+|\phi^{(3,0)}(z)|^2=0.
\end{split}\end{equation}
Combining (\ref{221}) with (\ref{92eq2}), we get
\begin{equation}\begin{split}\label{92eq3}
&2\Big(-2\ov{z}f^{(1,2)}(z)|z|^2+i\ov{\Phi_0^{(2,0)}(z)}\cdot
{\Phi_0^{(2,1)}(z)}\Big)|z|^2+|\phi^{(3,0)}(z)|^2=0.
\end{split}\end{equation}
Recall that in Corollary \ref{hjy1}, we have obtained
\begin{equation}\begin{split}
|\phi^{(3,0)}(z)|^2=4|z|^2\Big(\frac{|\xi_1|^2}{\mu_1}+\frac{|\xi_2|^2}{\mu_2}\Big).
\end{split}\end{equation}
 Hence we have
\begin{equation}\begin{split}
2\Big(-2\ov{z}f^{(1,2)}(z)|z|^2+i\ov{\Phi_0^{(2,0)}(z)}\cdot
{\Phi_0^{(2,1)}(z)}\Big)+4\Big(\frac{|\xi_1|^2}{\mu_1}+\frac{|\xi_2|^2}{\mu_2}\Big)=0.
\end{split}\end{equation}
Notice that $|\xi_i|^2=\ov{\xi_i}\xi_i=\xi_ie_i\cdot
\ov{\Phi_0^{(2,0)}(z)}$. Set
\begin{equation}\label{2013 4.20}
\begin{split}
\w{\phi}^{(2,1)}(z)=\phi^{(2,1)}(z)-2i\sum\limits_{j=1}^{2}\frac{\xi_j}{\mu_j}e^*_j,
\ \ \text{{$\w \Phi^{(2,1)}_0(z)=\Phi^{(2,1)}_0(z) -2i \sum^2_{j=1}
\frac{\xi_j}{\mu_j}e_j $}}.
\end{split}\end{equation}
Then we have
\begin{equation}\begin{split}
\ov{\Phi_0^{(2,0)}(z)}\cdot\w{\Phi}_0^{(2,1)}(z)=-2i|z|^2\ov{z}\cdot
f^{(1,2)}(z).
\end{split}\end{equation}
Recall $f_j=z_j$ for $3\le j\le n-1$ so that $\ov {z} \cdot
f^{(1,2)}(z)=(\ov {z_1}, \ov {z_2})\cdot (f^{(1,2)}_1(z),
f^{(1,2)}_2(z))$ and thus we get
\begin{equation}\begin{split}
&\w{\Phi}_{11}^{(2,1)}(z)=\frac{-2i}{\sqrt{\mu_1}}z_1f_1^{(1,2)}(z),\
\w{\Phi}_{12}^{(2,1)}(z)=\frac{-2i}{\sqrt{\mu_1+\mu_2}}\big(z_1f_2^{(1,2)}(z)+z_2f_1^{(1,2)}(z)\big),\\
&\w{\Phi}_{22}^{(2,1)}(z)=\frac{-2i}{\sqrt{\mu_2}}z_2f_2^{(1,2)}(z),\\
&\w{\Phi}_{1j}^{(2,1)}(z)=\frac{-2i}{\sqrt{\mu_1}}z_jf_1^{(1,2)}(z),\
\w{\Phi}_{2j}^{(2,1)}(z)=\frac{-2i}{\sqrt{\mu_2}}z_jf_2^{(1,2)}(z)\
\text{with}\ j\geq 3.
\end{split}\end{equation}
Making use of Lemma \ref{product}, we get
\begin{equation}\begin{split}\label{97eq8}
|\w{\Phi}_{0}^{(2,1)}(z)|^2=4|z|^2\sum\limits_{j=1}^{2}\frac{1}{\mu_j}|f_j^{(1,2)}(z)|^2
-\frac{4}{\mu_1\mu_2(\mu_1+\mu_2)}\big|\mu_1z_1f_2^{(1,2)}(z)-\mu_2z_2f_1^{(1,2)}(z)\big|^2,
\end{split}\end{equation}
and
\begin{equation}\begin{split}\label{97eq11}
\ov{\w{\Phi}_{0}^{(2,1)}(z)}\Phi_0^{(3,0)}\text{{$(z)$}}=&-4|z|^2\sum\limits_{j=1}^{2}\frac{1}{\mu_j}\ov{f_j^{(1,2)}(z)}\xi_j\\
&+\frac{4}{\mu_1\mu_2(\mu_1+\mu_2)}\ov{\Big(\mu_1z_1f_2^{(1,2)}(z)-\mu_2z_2f_1^{(1,2)}(z)\Big)}
\cdot \Big(\mu_1z_1\xi_2-\mu_2z_2\xi_1\Big).
\end{split}\end{equation}

Notice that if we replace $z_1,z_2$ by $\frac{\xi_1}{\mu_1}$,
\text{{$\frac{\xi_2}{\mu_2}$}}, respectively, in (\ref{112}), we get
\begin{equation}\begin{split}
&2\text{Re}\Big\{\frac{\ov{\xi_1}}{\mu_1}
   \Big(f_{1}^{(I_1+2I_n)}\frac{\xi_1}{\mu_1}+f_{1}^{(I_2+2I_n)}\frac{\xi_2}{\mu_2}\Big)
   +\frac{\ov{\xi_2}}{\mu_2}
   \Big(f_{2}^{(I_1+2I_n)}\frac{\xi_1}{\mu_1}+f_{2}^{(I_2+2I_n)}\frac{\xi_2}{\mu_2}\Big)\Big\}\\
&+\frac{1}{4}(|\xi_1|^2+|\xi_2|^2)+|\frac{\xi_1}{\mu_1}e^*_1+\frac{\xi_2}{\mu_2}e^*_2|^2=0.
\end{split}\end{equation}
Here we have used \text{{$f_j^{(1,1)}(z)=\frac{i}{2}\mu_j z_j$}} for
\text{{$j=1,2.$}} Combining this with (\ref{97eq1}), we get
\begin{equation}\begin{split}\label{97eq7}
&-2\text{Re}\Big\{2i\Big(\frac{\ov\xi_1}{\mu_1}\ov{\Phi_0^{(I_1+2I_n)}}
+\frac{\ov\xi_2}{\mu_2}\ov{\Phi_0^{(I_2+2I_n)}}\Big)\cdot\Phi_0^{(2,0)}(z)\Big\}\\
&+(|\xi_1|^2+|\xi_2|^2)+4\big|\frac{\xi_1}{\mu_1}e^*_1+\frac{\xi_2}{\mu_2}e^*_2\big|^2=0.
\end{split}\end{equation}

\medskip


Considering terms of weighted degree $7$ in the basic equation
(\ref{*}), we get
\begin{equation}\begin{split}
2\text{Re}\Big\{&\ov{z}f^{(6)}(z,w)+\ov{f^{(3)}(z,w)}
f^{(4)}(z,w)+\ov{\Phi_0^{(2)}(z,w)}{\Phi_0^{(5)}(z,w)}\\
&+\ov{\phi^{(3)}(z,w)}{\phi^{(4)}}(z,w)\Big\}=0
\end{split}\end{equation}
over Im$(w)=|z|^2$. Namely, we have
\begin{equation}\begin{split}
&2\text{Re}\Big\{\ov{z}\Big(f^{(4,1)}(z)(u+i|z|^2)+f^{(2,2)}(z)(u+i|z|^2)^2\Big)+
\ov{f^{(1,1)}(z)(u+i|z|^2)}\cdot
f^{(2,1)}(z)\\
&\cdot(u+i|z|^2)+\ov{\Phi_0^{(2,0)}(z)}\Big(\Phi_0^{(5,0)}(z)
   +\Phi_0^{(3,1)}(z)(u+i|z|^2)+\Phi_0^{(1,2)}(z)(u+i|z|^2)^2\Big)\\
&+\ov{\big(\phi^{(3,0)}(z)+\phi^{(1,1)}(z)(u+i|z|^2)\big)}\cdot
\big(\phi^{(4,0)}(z)
   +\phi^{(2,1)}(z)(u+i|z|^2)\big)   \Big\}=0.
\end{split}\end{equation}
{Here we notice that the $f^{(6,0)}(z)$ term is not involved (cf.
\cite{HJX1}, Lemma 2.3(A)).} Collecting terms of the form
$z^{\a}\ov{z}^{\b}u^2$ with $|\a|=2,|\b|=1$, we get
\begin{equation}\begin{split}\label{212}
&\ov{z}f^{(2,2)}(z)+\ov{f^{(1,1)}(z)}\cdot f^{(2,1)}(z)
+\ov{\Phi_0^{(1,2)}(z)}\cdot
\Phi_0^{(2,0)}(z)+\ov{\phi^{(1,1)}(z)}\cdot \phi^{(2,1)}(z)=0.
\end{split}\end{equation}
Collecting terms of  the form $z^{\a}\ov{z}^{\b}u$ with
$|\a|=3,|\b|=2$, we get
\begin{equation}\begin{split}\label{321}
2i\ov{z}|z|^2f^{(2,2)}(z)+\ov{\Phi_0^{(2,0)}(z)}\cdot
\Phi_0^{(3,1)}(z)-2i|z|^2\ov{\Phi_0^{(1,2)}(z)}\cdot
\Phi_0^{(2,0)}(z)+\ov{\phi^{(2,1)}(z)}\cdot \phi^{(3,0)}(z)=0.
\end{split}\end{equation}
Collecting terms of  the form $z^{\a}\ov{z}^{\b}$ with
$|\a|=4,|\b|=3$, we get
\begin{equation}\begin{split}\label{430}
&-\ov{z}f^{(2,2)}(z)|z|^4+\ov{f^{(1,1)}(z)}\cdot
f^{(2,1)}(z)|z|^4+i|z|^2\ov{\Phi_0^{(2,0)}(z)}\cdot\Phi_0^{(3,1)}(z)-|z|^4\ov{\Phi_0^{(1,2)}(z)}\cdot
\Phi_0^{(2,0)}(z)\\
&+\ov{\phi^{(3,0)}(z)}\cdot
\phi^{(4,0)}(z)-i|z|^2\ov{\phi^{(2,1)}(z)}\cdot
\phi^{(3,0)}(z)+|z|^4\ov{\phi^{(1,1)}(z)}\cdot \phi^{(2,1)}(z)=0.
\end{split}\end{equation}

By calculating (\ref{430})$-$ $|z|^4\cdot$(\ref{212}), we get
\begin{equation}\begin{split}\label{92eq4}
&-2\ov{z}f^{(2,2)}(z)|z|^4+i|z|^2\ov{\Phi_0^{(2,0)}(z)}\Phi_0^{(3,1)}(z)-2|z|^4\ov{\Phi_0^{(1,2)}(z)}\cdot
\Phi_0^{(2,0)}(z)\\
&+\ov{\phi^{(3,0)}(z)}\cdot
\phi^{(4,0)}(z)-i|z|^2\ov{\phi^{(2,1)}(z)}\cdot \phi^{(3,0)}(z)=0.
\end{split}\end{equation}
By calculating (\ref{92eq4})$-i|z|^2$(\ref{321}), we get
\begin{equation}\begin{split}\label{92eq5}
\ov{\phi^{(3,0)}(z)}\cdot
\phi^{(4,0)}(z)=4|z|^4\ov{\Phi_0^{(1,2)}(z)}\cdot
\Phi_0^{(2,0)}(z)+2i|z|^2\ov{\phi^{(2,1)}(z)}\cdot \phi^{(3,0)}(z).
\end{split}\end{equation}
Combining this with (\ref{321}), we get
\begin{equation}\begin{split}\label{92eq05}
\ov{\phi^{(3,0)}(z)}\cdot
\phi^{(4,0)}(z)=-2i|z|^2\Big(2i|z|^2\ov{z}f^{(2,2)}(z)+\ov{\Phi_0^{(2,0)}(z)}\cdot\Phi_0^{(3,1)}(z)\Big).
\end{split}\end{equation}

By (\ref{92eq1}), we have
\begin{equation}\begin{split}\label{915eq1}
&\mu_{11}\cdot \Phi_{11}^{(4,0)}(z)=2iz_1\phi^{(3,0)}(z)\cdot \ov{e^*_1},\\
&\mu_{12}\cdot \Phi_{12}^{(4,0)}(z)=2iz_1\phi^{(3,0)}(z)\cdot \ov{e^*_2}+2iz_2\phi^{(3,0)}(z)\cdot \ov{e^*_1},\\
&\mu_{22}\cdot \Phi_{22}^{(4,0)}(z)=2iz_2\phi^{(3,0)}(z)\cdot \ov{e^*_2},\\
&\mu_{1j}\cdot \Phi_{1j}^{(4,0)}(z)=2iz_j\phi^{(3,0)}(z)\cdot \ov{e^*_1},\  j\ge 3,\\
&\mu_{2j}\cdot \Phi_{2j}^{(4,0)}(z)=2iz_j\phi^{(3,0)}(z)\cdot
\ov{e^*_2},\  j\ge 3.
\end{split}\end{equation}
Write
$$
\eta^*_1=\phi^{(3,0)}(z)\cdot \ov{e^*_1},\
\eta^*_2=\phi^{(3,0)}(z)\cdot \ov{e^*_2};\
\eta_1=\Phi_0^{(3,0)}(z)\cdot \ov{e_1},\
\eta_2=\Phi_0^{(3,0)}(z)\cdot \ov{e_2}.
$$
Making use of Lemma \ref{product}, we get
\begin{equation}\begin{split}\label{97eq10}
\ov{\Phi_0^{(3,0)}(z)}\Phi_0^{(4,0)}(z)
       =&4|z|^2\Big(\frac{\ov{\xi_1}\eta^*_1}{\mu_1}+\frac{\ov{\xi_2}\eta^*_2}{\mu_2}\Big)-\frac{4}{\mu_1+\mu_2}
        \Big(\sqrt{\frac{\mu_2}{\mu_1}}\ov{z_2}\ov{\xi_1}-\sqrt{\frac{\mu_1}{\mu_2}}\ov{z_1}\ov{\xi_2}\Big)\\&
        \cdot
         \Big(\sqrt{\frac{\mu_2}{\mu_1}}z_2\eta^*_1-\sqrt{\frac{\mu_1}{\mu_2}}z_1\eta^*_2\Big).
\end{split}\end{equation}
Combining (\ref{92eq05}) with (\ref{97eq10}) and making use of Lemma
\ref{huanglemma}, we get
\[
\ov{\Phi_{1}^{(3,0)}(z)}\Phi_{1}^{(4,0)}(z)=\frac{4}{\mu_1+\mu_2}
        \big(\sqrt{\frac{\mu_2}{\mu_1}}\ov{z_2}\ov{\xi_1}-\sqrt{\frac{\mu_1}{\mu_2}}\ov{z_1}\ov{\xi_2}\big)\cdot
         \big(\sqrt{\frac{\mu_2}{\mu_1}}z_2\eta^*_1-\sqrt{\frac{\mu_1}{\mu_2}}z_1\eta^*_2\big).
  \]
Now, by Corollary \ref{hjy1}, we have
\begin{equation}\begin{split}\label{99eq1}
\phi_{33}^{(4,0)}(z)=
\frac{2}{\sqrt{\mu_1+\mu_2}}\big(\sqrt{\frac{\mu_1}{\mu_2}}z_1\eta^*_2-\sqrt{\frac{\mu_2}{\mu_1}}z_2\eta^*_1\big).
\end{split}\end{equation}
Moreoever
\begin{equation}\begin{split}
2i(\frac{\ov{\xi_1}\eta^*_1}{\mu_1}+\frac{\ov{\xi_2}\eta^*_2}{\mu_2})
=2i|z|^2\ov{z}f^{(2,2)}(z)+\ov{\Phi_0^{(2,0)}(z)}\cdot\Phi_0^{(3,1)}(z).
\end{split}\end{equation}
Write
\begin{equation}\begin{split}
\w{\phi}^{(3,1)}(z)=\phi^{(3,1)}(z)-2i(\frac{\eta^*_1}{\mu_1}e^*_1+\frac{\eta^*_2}{\mu_2}e^*_2),\
\ \text{{$\w \Phi^{(3,1)}_0(z)=\Phi^{(3,1)}_0(z)
-2i(\frac{\eta_1^*}{\mu_1}e_1 + \frac{\eta_2^*}{\mu_2} e_2)$}}.
\end{split}\end{equation}
Then we have
$$
\ov{\Phi_0^{(2,0)}(z)}\wt{\Phi_0}^{(3,1)}(z)=-2i|z|^2\ov{z}f^{(2,2)}(z).$$
Hence, we get

\begin{equation}\begin{split}
&\mu_{11}\cdot \w{\Phi}_{11}^{(3,1)}(z)=-2iz_1f_1^{(2,2)}(z),\\
&\mu_{12}\cdot \w{\Phi}_{12}^{(3,1)}(z)=-2i\big(z_1f_2^{(2,2)}(z)+z_2f_1^{(2,2)}(z)\big),\\
&\mu_{22}\cdot \w{\Phi}_{22}^{(3,1)}(z)=-2iz_2f_2^{(2,2)}(z),\\
&\mu_{1j}\cdot \w{\Phi}_{1j}^{(3,1)}(z)=-2iz_jf_1^{(2,2)}(z),\ j\ge 3,\\
&\mu_{2j}\cdot \w{\Phi}_{2j}^{(3,1)}(z)=-2iz_jf_2^{(2,2)}(z),\ j\ge
3.
\end{split}\end{equation}
By Lemma \ref{product}, we have
\begin{equation}\begin{split}
\ov{{\Phi}_0^{(3,0)}(z)}\w{\Phi}_0^{(3,1)}(z)=&-4|z|^2\Big(\frac{\ov{\xi_1}}{\mu_1}f_1^{(2,2)}(z)
+\frac{\ov{\xi_2}}{\mu_2}f_2^{(2,2)}(z)\Big)+\frac{4}{\mu_1+\mu_2}\Big(\sqrt{\frac{\mu_1}{\mu_2}}
\ov{z_1}\ov{\xi_2}-\sqrt{\frac{\mu_2}{\mu_1}}\ov{z_2}\ov{\xi_1}\Big)\\
&\cdot
\Big(\sqrt{\frac{\mu_1}{\mu_2}}z_1f_2^{(2,2)}(z)-\sqrt{\frac{\mu_2}{\mu_1}}z_2f_1^{(2,2)}(z)\Big).
\end{split}\end{equation}
Notice that
\begin{equation}\begin{split}
\ov{{\Phi}_0^{(3,0)}(z)}\cdot
2i(\frac{\eta^*_1}{\mu_1}e_1+\frac{\eta_2^*}{\mu_2}e_2)
=2i(\frac{\eta^*_1}{\mu_1}\ov{\eta_1}+\frac{\eta^*_2}{\mu_2}\ov{\eta_2}).
\end{split}\end{equation}
Hence
\begin{equation}\begin{split}\label{97eq2}
&\ov{{\Phi}_0^{(3,0)}(z)}{\Phi}_0^{(3,1)}(z)\\
=&2i\big(\frac{\eta^*_1}{\mu_1}\ov{\eta_1}
+\frac{\eta^*_2}{\mu_2}\ov{\eta_2}\big)
-4|z|^2\Big(\frac{\ov{\xi_1}}{\mu_1}f_1^{(2,2)}(z)+\frac{\ov{\xi_2}}{\mu_2}f_2^{(2,2)}(z)\Big)\\
&+\frac{4}{\mu_1+\mu_2}\Big(\sqrt{\frac{\mu_1}{\mu_2}}
\ov{z_1}\ov{\xi_2}-\sqrt{\frac{\mu_2}{\mu_1}}\ov{z_2}\ov{\xi_1}\Big)\cdot
         \Big(\sqrt{\frac{\mu_1}{\mu_2}}z_1f_2^{(2,2)}(z)-\sqrt{\frac{\mu_2}{\mu_1}}z_2f_1^{(2,2)}(z)\Big).
\end{split}\end{equation}

Combining (\ref{92eq5}) with (\ref{97eq10}) and making use of Lemma
\ref{huanglemma}  and Corollary \ref{hjy1} again, we get
\begin{equation}\begin{split}
4|z|^2\ov{\Phi_0^{(1,2)}(z)}\cdot
\Phi_0^{(2,0)}(z)+2i\ov{\phi^{(2,1)}(z)}\cdot
\phi^{(3,0)}(z)=4(\frac{1}{\mu_1}\ov{\xi_1}\eta^*_1+\frac{1}{\mu_2}\ov{\xi_2}\eta^*_2).
\end{split}\end{equation}
Namely, we have
\begin{equation}\begin{split}\label{97eq12}
|z|^2A(z,\ov{z})+2i\ov{\w{\phi}^{(2,1)}(z)}\cdot \phi^{(3,0)}(z)=0.
\end{split}\end{equation}
Here, as before, we write $A(z,\ov{z})$ for a real analytic function
which may be different in different contexts.

 Combining (\ref{97eq11}) with (\ref{97eq12})  and making use of
 Lemma \ref{huanglemma} and Corollary 3.4, we  get
\begin{equation}\begin{split}\label{97eq14}
\w{\phi}_{33}^{(2,1)}(z)=\frac{-2}{\sqrt{\mu_1+\mu_2}}\Big(\sqrt{\frac{\mu_1}{\mu_2}}z_1f_2^{(1,2)}(z)
     -\sqrt{\frac{\mu_2}{\mu_1}}z_2f_1^{(1,2)}(z)\Big).
\end{split}\end{equation}
 Next we will prove that
$\phi_{3j}^{(4,0)}(z)=0,\ \w{\phi}_{3j}^{(2,1)}(z)=0$ for
$j=4,\cdots, K$ with $K=N-n-(n-1)-(n-2)$.

\medskip

Considering terms of weighted degree $8$ in the basic equation
(\ref{*}), we get
\begin{equation}\begin{split}
&2\text{Re}\Big\{\ov{z}f^{(7)}(z,w)+\ov{f^{(3)}(z,w)}f^{(5)}(z,w)+\ov{\Phi_0^{(2)}(z,w)}{\Phi_0^{(6)}(z,w)}\\
&+\ov{\phi^{(3)}(z,w)}{\phi^{(5)}(z,w)}\Big\}
+\big|f^{(4)}(z,w)\big|^2+\big|\phi^{(4)}(z,w)\big|^2=0
\end{split}\end{equation}
over Im$(w)=|z|^2$. Namely, we have
\begin{equation}\begin{split}
&2\text{Re}\Big\{\ov{z}\Big(f^{(5,1)}(z)(u+i|z|^2)+f^{(3,2)}(z)(u+i|z|^2)^2+f^{(1,3)}(z)(u+i|z|^2)^3\Big)\\
&+\ov{f^{(1,1)}(z)(u+i|z|^2)}\cdot\Big(f^{(3,1)}(z)(u+i|z|^2)+f^{(1,2)}(z)(u+i|z|^2)^2\Big)\\
&+\ov{\Phi_0^{(2,0)}(z)}\cdot\Big(\Phi_0^{(6,0)}(z)+\Phi_0^{(4,1)}(z)(u+i|z|^2)+\Phi_0^{(2,2)}(z)(u+i|z|^2)^2\Big)\\
&+\Big(\ov{\phi^{(3,0)}(z)}+\ov{\phi^{(1,1)}(z)(u+i|z|^2)}\Big)\cdot
\Big(\phi^{(5,0)}(z)+\phi^{(3,1)}(z)(u+i|z|^2)+\phi^{(1,2)}(z)(u+i|z|^2)^2\Big)\Big\}\\
&+\Big|f^{(2,1)}(z)(u+i|z|^2)\Big|^2+\Big|\phi^{(4,0)}(z)+\phi^{(2,1)}(z)(u+i|z|^2)\Big|^2=0.
\end{split}\end{equation}
{Here we notice that the $f^{(7,0)}(z)$ term is not involved (cf.
\cite{HJX1}, Lemma 2.3(A)).} Collecting terms of the form
$z^{\a}\ov{z^{\b}}$ with $|\a|=4,|\b|=4$, we get
\begin{equation}\begin{split}\label{97eq3}
&2\text{Re}\Big\{-i\ov{z}f^{(1,3)}(z)|z|^6-\ov{f^{(1,1)}(z)}(-i|z|^2)f^{(1,2)}(z)|z|^4
+\ov{\Phi_0^{(2,0)}(z)}\Phi_0^{(2,2)}(z)(-|z|^4)\\&+
\ov{\phi^{(3,0)}(z)}\phi^{(3,1)}(z)i|z|^2+\ov{\phi^{(1,1)}(z)}(-i|z|^2)\phi^{(1,2)}(z)(-|z|^4)\Big\}\\
&+\big|\phi^{(4,0)}(z)\big|^2+\big(\big|f^{(2,1)}(z)\big|^2+\big|\phi^{(2,1)}(z)\big|^2\big)|z|^4=0.
\end{split}\end{equation}
Collecting terms of the form $z^{\a}\ov{z^{\b}}u^2$ with
$|\a|=2,|\b|=2$, we get
\begin{equation}\begin{split}\label{97eq4}
&2\text{Re}\Big\{\ov{z}f^{(1,3)}(z)3i|z|^2+\ov{f^{(1,1)}(z)}f^{(1,2)}(z)i|z|^2+\ov{\Phi_0^{(2,0)}(z)}\Phi_0^{(2,2)}(z)+
\ov{\phi^{(1,1)}(z)}\phi^{(1,2)}(z)i|z|^2\Big\}\\
&+\big(\big|f^{(2,1)}(z)\big|^2+\big|\phi^{(2,1)}(z)\big|^2\big)=0.
\end{split}\end{equation}
Collecting terms of the form $z^{\a}\ov{z^{\b}}u$ with
$|\a|=3,|\b|=3$, we get
\begin{equation}\begin{split}\label{97eq5}
&2\text{Re}\Big\{\ov{z}f^{(1,3)}(z)3(-|z|^4)+\ov{f^{(1,1)}(z)}f^{(1,2)}(z)|z|^4
+ \ov{\Phi_0^{(2,0)}(z)}\Phi_0^{(2,2)}(z)2i|z|^2
\\&+\ov{\phi^{(3,0)}(z)}\phi^{(3,1)}(z)+
\ov{\phi^{(1,1)}(z)}\phi^{(1,2)}(z)|z|^4\Big\}=0.
\end{split}\end{equation}
By (\ref{97eq3}) and (\ref{97eq4}), we get
\begin{equation}\begin{split}\label{93eq1}
&|z|^2\cdot
2\text{Re}\Big\{-4i\ov{z}f^{(1,3)}(z)|z|^4-2\ov{\Phi_0^{(2,0)}(z)}\Phi_0^{(2,2)}(z)(|z|^2)\\
&+
i\ov{\phi^{(3,0)}(z)}\phi^{(3,1)}(z)\Big\}+\big|\phi^{(4,0)}(z)\big|^2=0.
\end{split}\end{equation}
Combining this with (\ref{97eq5}), we get
\begin{equation}\begin{split}\label{98eq1}
&|z|^6A(z,\ov{z})+2|z|^2\cdot\big(
-2\ov{\Phi_0^{(2,0)}(z)}\Phi_0^{(2,2)}(z)(|z|^2)+
i\ov{\phi^{(3,0)}(z)}\phi^{(3,1)}(z)\big)+\big|\phi^{(4,0)}(z)\big|^2=0.
\end{split}\end{equation}

By (\ref{915eq1}) and Lemma \ref{product}, we get
\begin{equation}\begin{split}
\frac{1}{4}\big|\Phi_0^{(4,0)}(z)\big|^2
       =&|z|^2(\frac{1}{\mu_1}|\eta^*_1|^2
       +\frac{1}{\mu_2}|\eta^*_2|^2)-\frac{1}{\mu_1+\mu_2}
        \big|\sqrt{\frac{\mu_2}{\mu_1}}z_2\eta^*_1-\sqrt{\frac{\mu_1}{\mu_2}}z_1\eta^*_2\big|^2.
\end{split}\end{equation}
Combining this with (\ref{98eq1}) and making use of  Lemma
\ref{huanglemma}, we get
\begin{equation}\begin{split}\label{97eq6}
& |z|^4A(z,\ov{z})-4|z|^2\ov{\Phi_0^{(2,0)}(z)}\Phi_0^{(2,2)}(z)+
2i\ov{\phi^{(3,0)}(z)}\phi^{(3,1)}(z)+\frac{4}{\mu_1}|\eta^*_1|^2
       +\frac{4}{\mu_2}|\eta^*_2|^2=0,
\end{split}\end{equation}
and
\begin{equation}\begin{split}\label{99eq2}
\frac{1}{4}|\Phi_1^{(4,0)}(z)|^2=&\frac{1}{\mu_1+\mu_2}
        \big|\sqrt{\frac{\mu_2}{\mu_1}}z_2\eta^*_1-\sqrt{\frac{\mu_1}{\mu_2}}z_1\eta^*_2\big|^2.
\end{split}\end{equation}
By (\ref{99eq1}) and (\ref{99eq2}), we get
\begin{equation}\begin{split}
&{\phi}_{33}^{(4,0)}(z)=\frac{2}{\sqrt{\mu_1+\mu_2}}\Big(\sqrt{\frac{\mu_1}{\mu_2}}z_1\eta^*_2
-\sqrt{\frac{\mu_2}{\mu_1}}z_2\eta^*_1\Big),\\
&{\phi}_{3j}^{(4,0)}(z)=0\ \text{for}\ j>3.
\end{split}\end{equation}

This proves Theorem \ref{prop11} (1).

\medskip
Substituting (\ref{97eq2}) into (\ref{97eq6}), we get
\begin{equation}\begin{split}
&|z|^4A(z,\ov{z})-4|z|^2\ov{\Phi_0^{(2,0)}(z)}\Phi_0^{(2,2)}(z)+2i\ov{\Phi_1^{(3,0)}(z)}\Phi_1^{(3,1)}(z)
-8i|z|^2\big(\frac{\ov{\xi_1}}{\mu_1}f_1^{(2,2)}(z)+\frac{\ov{\xi_2}}{\mu_2}f_2^{(2,2)}(z)\big)\\
&+\frac{8i}{\mu_1+\mu_2}\big(\sqrt{\frac{\mu_1}{\mu_2}}
\ov{z_1}\ov{\xi_2}-\sqrt{\frac{\mu_2}{\mu_1}}\ov{z_2}\ov{\xi_1}\big)\cdot
         \big(\sqrt{\frac{\mu_1}{\mu_2}}z_1f_2^{(2,2)}(z)-\sqrt{\frac{\mu_2}{\mu_1}}z_2f_1^{(2,2)}(z)\big)\\ &
         +\frac{4}{\mu_1}\eta_1^*\ov{(\eta_1^*-\eta_1)}+\frac{4}{\mu_2}\eta_2^*\ov{(\eta_2^*-\eta_2)}=0.
\end{split}\end{equation}
Notice that $\Phi_1^{(3,0)}(z)=(\phi_{33}^{(3,0)}(z),0,\cdots,0)$
and $n\ge 7$. Making use of  Lemma \ref{huanglemma}, we get
\begin{equation}\begin{split}
\label{2013 4.59} &\ov{\Phi_0^{(2,0)}(z)}\Phi_0^{(2,2)}(z)=
-2i\big(\frac{\ov{\xi_1}}{\mu_1}f_1^{(2,2)}(z)+\frac{\ov{\xi_2}}{\mu_2}f_2^{(2,2)}(z)\big)+|z|^2A(z,\ov{z}).
\end{split}\end{equation}
By (\ref{212}), we have
\begin{equation}\begin{split}
f_1^{(2,2)}(z)=&\frac{i}{2}\mu_1f_1^{(2,1)}(z)-\ov{\Phi_0^{(I_1+2I_n)}}\Phi_0^{(2,0)}(z)-\ov{e_1^*}\phi^{(2,1)}(z),\\
f_2^{(2,2)}(z)=&\frac{i}{2}\mu_2f_2^{(2,1)}(z)-\ov{\Phi_0^{(I_2+2I_n)}}\Phi_0^{(2,0)}(z)-\ov{e_2^*}\phi^{(2,1)}(z).
\end{split}\end{equation}
Thus we get
\begin{equation}\begin{split}\label{i}
&2\text{Re}\Big\{-2i\big(\frac{\ov{\xi_1}}{\mu_1}f_1^{(2,2)}(z)+\frac{\ov{\xi_2}}{\mu_2}f_2^{(2,2)}(z)\big)\Big\}=
I+II+III.
\end{split}\end{equation}
Here
\begin{equation}\begin{split}\label{ii}
I&=2\text{Re}\Big\{-2i\Big(\frac{\ov{\xi_1}}{\mu_1}\frac{i}{2}\mu_1(-\xi_1)
    +\frac{\ov{\xi_2}}{\mu_2}\frac{i}{2}\mu_2(-\xi_2)\Big)\Big\}=-2(|\xi_1|^2+|\xi_2|^2).\\
II&=2\text{Re}\Big(2i\frac{\ov{\xi_1}}{\mu_1}\ov{\Phi_0^{(I_1+2I_n)}}\Phi_0^{(2,0)}(z)
     +2i\frac{\ov{\xi_2}}{\mu_2}\ov{\Phi_0^{(I_2+2I_n)}}\Phi_0^{(2,0)}(z)\Big)\\
     &=(|\xi_1|^2+|\xi_2|^2)+4\big|\frac{\xi_1}{\mu_1}e^*_1+\frac{\xi_2}{\mu_2}e^*_2\big|^2.\\
III&=2\text{Re}\Big(2i\frac{\ov{\xi_1}}{\mu_1}\ov{e^*_1}\phi^{(2,1)}(z)
     +2i\frac{\ov{\xi_2}}{\mu_2}\ov{e^*_2}\phi^{(2,1)}(z)\Big).
\end{split}\end{equation}
The equality for $I$ follows from (\ref{hh}) and  $II$ follows from
(\ref{97eq7}). By (\ref{97eq4}), we get
\begin{equation}\begin{split}\label{phi21}
&|z|^2A(z,\ov{z})+2\text{Re}\big(\ov{\Phi_0^{(2,0)}(z)}\Phi_0^{(2,2)}(z)\big)
+\big(\big|f^{(2,1)}(z)\big|^2+\big|\phi^{(2,1)}(z)\big|^2\big)=0.
\end{split}\end{equation}
Substituting  (\ref{hh}), (\ref{2013 4.59}), (\ref{i}) and
(\ref{ii}) into (\ref{phi21}), we get
\begin{equation}\begin{split}
&|z|^2A(z,\ov{z})-2(|\xi_1|^2+|\xi_2|^2)+(|\xi_1|^2+|\xi_2|^2)
+4\big|\frac{\xi_1}{\mu_1}e^*_1+\frac{\xi_2}{\mu_2}e^*_2\big|^2\\
&+2\text{Re}\Big(2i\frac{\ov{\xi_1}}{\mu_1}\ov{e^*_1}\phi^{(2,1)}(z)
     +2i\frac{\ov{\xi_2}}{\mu_2}\ov{e^*_2}\phi^{(2,1)}(z)\Big)+(|\xi_1|^2+|\xi_2|^2)+\big|\phi^{(2,1)}(z)\big|^2=0.
\end{split}\end{equation}
Hence we get
\begin{equation}\begin{split}\label{97eq9}
&|z|^2A(z,\ov{z})+\Big|\phi^{(2,1)}\text{{$(z)$}}
-2i\big(\frac{\xi_1}{\mu_1}e^*_1+\frac{\xi_2}{\mu_2}e^*_2\big)\Big|^2=0.
\end{split}\end{equation}
Substituting (\ref{97eq8}) into (\ref{97eq9}), we get
\begin{equation}\begin{split}
&|z|^2A(z,\ov{z})+\Big|\w{\Phi}_1^{(2,1)}(z)\Big|^2
-\frac{4}{\mu_1+\mu_2}\Big|\sqrt{\frac{\mu_1}{\mu_2}}z_1f_2^{(1,2)}(z)
-\sqrt{\frac{\mu_2}{\mu_1}}z_2f_1^{(1,2)}(z)\Big|^2=0.
\end{split}\end{equation}
Making use of (\ref{97eq14}) and  Lemma \ref{huanglemma}, we get
\begin{equation}\begin{split}
&\w{\phi}_{33}^{(2,1)}(z)= \frac{-2}{\sqrt{\mu_1+\mu_2}}
\Big(\sqrt{\frac{\mu_1}{\mu_2}}z_1f_2^{(1,2)}(z)
-\sqrt{\frac{\mu_2}{\mu_1}}z_2f_1^{(1,2)}(z)\Big),\\
&\w{\phi}_{3j}^{(2,1)}(z)=0\ \text{for}\ j>3.
\end{split}\end{equation}
By (\ref{2013 4.20}), the proof of Theorem \ref{prop11} (2) is also complete.

\section{Proof of Theorem \ref{thm1}}
{\bf Step (I): An application of a normal form in [HJX1] for maps
with geometric rank 1}:
  We first consider $F\in Rat({\BB}^n,{\BB}^N)$ with
geometric rank $1$. Then by Theorem 1.2 of [HJX1], $F$ is equivalent
to a map of the form $\Phi=(z_1,\cdots,
z_{n-1},z_nH(z)):=(\phi_1,\cdots,\phi_N)$ with $H\in
Rat({\BB}^n,{\BB}^{N-n+1})$ also of geometric rank one. We first
have the following:

\begin{lemma}\label{claim}
 If $H({\BB}^n)$ is contained in an affine subspace of
dimension $m$ in ${\CC}^{N-n+1}$, then $F({\BB}^n)$ is contained in
an affine subspace of dimension $m+n$ in ${\CC}^{N}$.
\end{lemma}

\begin{proof}
Indeed, we first notice that linear fractional transformations map
affine linear subspaces to affine linear subspaces. Also,
$F({\BB}^n)$ is contained in an affine subspace of dimension $m$, if
and only if $F$ is equivalent to a map of the form $(G,0)$ with $G$
having $m$-components.
  Now suppose the image of $H=(h_1,\cdots,
h_{N-n+1})$ is contained in an affine subspace of dimension $m\le N-n$,
then there are $(N-n-m+1)$ linearly independent vectors
$\mu_j=(a_{j1},\cdots,a_{jk})$ with $k=N-n+1$ such that
$\sum_{l=1}^{k}a_{jl}h_l(z)\equiv c_j$ for certain $c_j \in {\CC}$.
If $c_j=0$ for all $j$, then
$\sum_{l=1}^{k}a_{jl}\phi_{n-1+l}(z)\equiv 0$. Hence $\Phi({\BB}^n)$
is contained in an affine linear subspace of dimension $m+n-1$.
Otherwise, assume without loss of generality that $c_1=1$. Then we
have $\sum_{l=1}^{k}(a_{jl}- c_j a_{1l})\phi_{n-1+l}(z)\equiv 0$.
Notice that
$\{\mu_2-c_2\mu_1,\cdots,\mu_{N-n-m+1}-c_{N-n-m+1}\mu_1\}$ is also
linearly independent, we see that  $\Phi({\BB}^n)$ is contained in
an affine subspace of dimension $m+n$. This proves Lemma
\ref{claim}.
\end{proof}

\medskip
Applying the gap rigidity in [HJX1] {to $H$} and Lemma \ref{claim},
we  see that when $3n+1\le N\le 4n-5$ and $n\ge 6$, $F({\BB}^n)$ is
contained in an affine linear subspace of dimension $3n$.
This proves
Theorem \ref{thm1} in case the map $F$ has geometric rank one.

Indeed, by an induction argument, we see that when
$N<(k+1)n-\frac{k(k+1)}{2}$, $F({\BB}^n)$ is contained in a linear
affine subspace of dimension $kn$, if $(k+1)n-\frac{k(k+1)}{2}>0$.

\medskip
{\bf  Step (II):  Completion of the Proofs of Theorems \ref{thm1}}:
{By} Lemma 3.2 in [Hu2], when $N\le 4n-7$, any $F\in
Rat({\BB}^n,{\BB}^{N})$ can only have geometric rank $\kappa_0=0,1$,
{or} $2.$ When $\kappa_0=0$, $F$ is linear and thus Theorem
\ref{thm1} follows trivially. When $\kappa_0=1$, the proof of
Theorem \ref{thm1} is already done in Step (I). The case of Theorem
\ref{thm1} for maps with geometric rank two is obviously a special
case of the following Theorem \ref{thm2}.

\begin{theorem} \label {thm2} Let $F$ be a proper rational map from ${\HH}_n$ into ${\HH}_N$
with geometric rank $\kappa_0=2$. Assume that $n\ge 7$ and $3n\le
N\le 4n-6$. Then $F$ is equivalent to a map of the form $(G,0')$
where $G$ is a proper rational   map from ${\HH}_n$ into
${\HH}_{N'}$ with $N'=3n$.
\end{theorem}

{Since $F$ is rational, by a result of Cima-Suffridge, the above $F$
extends holomorphically across $\p {\HH}_n$.}

\medskip

Let $N$ be such that $N\le 4n-6$. Let $F$ be a proper rational
holomorphic map from ${\HH}_n$ into ${\HH}_N$ with geometric rank
$\kappa_0=2$ and $F(0)=0$.
 As mentioned {in} $\S 2$, we can assume, without loss
of generality, that $F$ satisfies the normalization in Theorem
\ref{thm3}.

Write ${\cal L}_j=\frac{\partial}{\partial
z_j}-2i\overline{z_j}\frac{\partial}{\partial w}$ for $j=1,\cdots,
n-1,$ which form  a basis of tangent vector fields of type $(1,0)$
along $\partial {\HH}_n$. Let ${\cal L}^\a$ be defined in the
standard way. Notice that for any smooth function $h$ near $0$,
$\mathcal{L}^{\a}h|_0=\frac{\p ^{|\a|}}{\p z^\a}h|_0:=D^\a_zh|_0$.

Assume the normalization in Corollary \ref{hjy1} for $F$. Also
assume that $\varphi_{33}^{(3,0)}(z)\not \equiv 0$. Then
\begin{equation}
\underset{|\beta|\le 3}{span}\{ {\cal L}^\beta F|_0\}= {span}\{ (0,
...,0, 1^{j^{th}},  0, ...,0),\ 1\leq j \leq n+\sharp
\mathcal{S}_0=n+(n-1)+(n-2)=3n-3\}.
\end{equation}
Applying Theorem \ref{prop11} (1), we see that
\begin{equation}
\underset{|\beta|\le 4}{span}\{ {\cal L}^\beta F|_0\}= {span}\{ (0,
...,0, 1^{j^{th}},  0, ...,0),\ 1\leq j \leq n+\sharp
\mathcal{S}_0\}.
\end{equation}
Hence
\begin{equation}
\underset{|\beta|\le 4}{span}\{ {\cal L}^\beta F|_0\}=
\underset{|\beta|\le 3}{span}\{ {\cal L}^\beta F|_0\}.
\end{equation}

Now, we  proceed  in  a similar  way as in \cite{Hu1}, though the
situation in \cite{Hu1} is harder for the maps there are only
assumed to be twice differentiable. Notice that we have assumed that
 $\phi^{(3,0)}_{33}(z)\not \equiv 0$.

For any $p\in \HH_n$ $(\approx 0)$, there exist $\tau_p\in
Aut_0({\HH_N} $), $\sigma_p \in Aut_0(\HH_n)$ such that $G_p=\tau_p
\circ F_p \circ \sigma_p$ satisfies the normalization condition  in
Theorem \ref{thm3}.  Moreover, we can also have the
$\Phi^{(3,0)}_1(z)$ coming from $G_p$ not identically zero,
for we can choose $\tau_p,\sigma_p$ to depend smoothly on $p$.
Hence, after applying $U_p$, a unitary matrix transformation, to
normalize the $\Phi_1$-part, we get the normalization as in
Corollary \ref{hjy1} for the new map  with the corresponding
$\phi^{(3,0)}_{33}(z)\not \equiv 0$.
 Notice that for the new $G_p$, we have
\begin{equation}\begin{split} \label{0011}
\underset{|\beta|\le 4}{span}\{ {\cal L}^\beta G_p|_0\}=
\underset{|\beta|\le 3}{span}\{ {\cal L}^\beta G_p|_0\}, \
\hbox{or}\ \underset{|\beta|\le 4}{span}\{ {D}_z^\beta G_p|_0\}=
\underset{|\beta|\le 3}{span}\{ {D}_z^\beta G_p|_0\}.
\end{split}\end{equation} Also the dimension of the above space is $n+\sharp
\mathcal{S}_0$ for any $p\approx 0$.

 Still write $\tau_p$ for $U_p\circ \tau_p$. Then
$F_p=\tau_p^{-1}\circ G_p \circ \sigma_p^{-1}$. Now, for any
$|\alpha|=4$, we claim  that
\begin{equation}
\label{(5)} {D}_z^\alpha(\tau_p^{-1}\circ G_p \circ
\sigma_p^{-1})|_0 \in \underset{|\beta|\le 3}{span} \{
{D}_z^\beta(\tau_p^{-1}\circ G_p\circ \sigma_p^{-1})|_0\}, \
\hbox{or}\  {\cal L}^\alpha F_p|_0 \in \underset{|\beta|\le 3}{span}
\{ {\cal L}^\beta F_p|_0\}.
\end{equation}
Here, as defined before, $D_z^\alpha$ is the regular
differentiation, with respect to $z$, of order $|\alpha|$.

Indeed, write
$$
\sigma_p^{-1}=\Big(\mu
\frac{z-aw}{q(z,w)}A,\mu^2\frac{w}{q(z,w)}\Big),\
\tau_p^{-1}=\Big(\w{\mu}
\frac{\w{z}-\w{a}\w{w}}{\w{q}(\w{z},\w{w})}\w{A},\w{\mu}^2\frac{\w{w}}{\w{q}(\w{z},\w{w})}\Big)
$$
with $\mu,\w{\mu}\neq 0$, $A,\w{A}$ unitary matrices,
$q(0),\w{q}(0)=1$.

Write $G_p=(h(z,w),w)$. Then
\begin{equation}\label{fpgp}
F_p(z,0)=\big(\frac{\w{\mu}}{{q^*}(z)}h(\frac{\mu
z}{q(z,0)}A,0)\w{A},0\big),
\end{equation}
for a certain holomorphic
function $q^*(z)$ with $q^*(0)=1$. Now to show that for any
$|\a|=4$, $D^\a_zF_p(z,0)|_0\in \underset{|\beta|\le
3}{span}\{D_z^\beta F_p(z,0)|_0\}$, it suffices to show that
$$
D^\a_z h\big(\frac{\mu z}{q(z,0)}A,0\big)\big|_0\in
\underset{|\beta|\le 3}{span}\big\{D_z^\beta h\big(\frac{\mu
z}{q(z,0)}A,0\big)\big|_0\big\}.
$$
Notice that
$$
\underset{|\a|\leq k}{span}\big\{D^\a_z h\big(\frac{\mu
z}{q(z,0)}A,0\big)\big|_0 \big\}=\underset{|\a|\leq
k}{span}\big\{D^\a_z h(z,0)\big|_0 \big\}
$$
and notice that (by (\ref{0011})) $$\
\underset{|\a|\leq 4}{span}\{D^\a_z h( z,0)|_0 \}=\underset{|\a|\leq
3}{span}\{D^\a_z h(z,0)|_0 \}.
$$ We conclude that $$\underset{|\a|\leq 4}{span}\{D^\a_z F_p(z,0)|_0
\}=\underset{|\a|\leq 3}{span}\{D^\a_z F_p(z,0)|_0 \}.$$ We thus
arrive at a  proof for  the claim. Moreover, we also conclude from
(\ref{fpgp}) that
$$dim\Big(\underset{|\a|\leq 3}{span}\{D^\a_z
F_p(z,0)|_0\}\Big)=dim\Big(\underset{|\a|\leq 3}{span}\{D^\a_z
G_p(z,0)|_0\}\Big)=n+\# {\cal S}_0.$$

\medskip

Since ${\cal L}^\a(F_p)|_0={\cal L}^\a(F)(p)$, we get  that for
$|\a|=4$,  $ {\cal L}^\alpha F(p) \in \underset{|\beta|\le 3}{span}
\{ {\cal L}^\beta F(p)\}.$ Since $\underset{|\beta|\le 3}{span} \{
{\cal L}^\beta F(p)\} $ has a fixed dimension $n+\# {\cal S}_0$ for
$p\approx 0$, we can write, for any $\a$, ${\cal L}^\alpha F(p)$ as
a smooth linear combination of a fixed (smoothly varied) basis from
$\underset{|\beta|\le 3}{span} \{ {\cal L}^\beta F(p)\} $.
Successively applying $\overline{\cal L}_j$, ${\cal L}_k$ as in the
proof of [Lemma 4.3, Hu1] to the so obtained expressions and using
the bracket property for such vector fields, we can obtain as in
[Hu1] that $D^\alpha F(0) \in \underset{|\beta|\le 3}{span}\{D^\beta
F(0)\}$ for any multiple index $\alpha$. Here $D^\alpha$ is the
regular total differentiation (not just along the $z$-directions) of
order $|\alpha|$. Thus $F(z,w) \in \underset{|\beta|\le
3}{span}\{D^\beta F(0)\}$ by the Taylor expansion for $(z,w)\approx
0$. Now,
write as before, {$\phi^{(1,1)}$}$(z)w=({e}^*_1 z_1 + {e}^*_{2}
z_{2})w.$ By Theorem \ref{prop11} (2) (3), we see that $
\underset{\beta\le 3}{span}\{D^\beta F(0)\}$ stays in the span of
the following vectors: $$\Big\{(0, ...,0, 1^{j^{th}},  0, ...,0),
(0, .., 0, 1),\ (0, 0, ...,0, \hat{e}_1,0),\\
(0,\cdots, 0, ...,0,\hat{e}_{2},0)\Big\},$$ where $\ 1\le j\le
\big((n-1)+(n-1)+(n-2)\big)+1$. Hence $F(\HH_n)$ is contained in a
linear subspace with dimension equal to $3n-3+2+1=3n$.
Hence, we see the  the proof of Theorem \ref{thm1} in this setting.

Now, if, for a certain $p_0\approx 0$, the $\phi^{(3,0)}_{33}(z)$
associated with $F_{p_0}$ is  not a zero polynomial, then we can
consider $F_{p_0}$ instead of $F$ and apply the above argument to
conclude the proof of Theorem \ref{thm2}. Finally, if after the
normalization of  $F_p$ to the form as in Corollary \ref{hjy1} for
any $p\approx 0$, we have $\Phi_1^{(3,0)}(z)\equiv 0$, then a
similar method as above shows that $ {\cal L}^\alpha F(p) \in
\underset{|\beta|\le 2}{span} \{ {\cal L}^\beta F(p)\}$ with
$dim\Big(\underset{|\beta|\le 2}{span} \{ {\cal L}^\beta
F(p)\}\Big)\equiv n+\#{\cal S}_0-1.$ Hence, $F({\BB}^n)$ is
contained in a complex linear subspace of dimension
$$n+(n-1)+(n-2)+2=3n-1,$$ spanned by
 $$\bigg\{(0, ...,0, 1^{j^{th}},  0, ...,0),
(0, .., 0, 1),\ (0, 0, ...,0, \hat{e}_1,0),\\
(0,\cdots, 0, ...,0,\hat{e}_{2},0)\bigg\},$$ where $\ 1\le j\le
(n-1)+(n-1)+(n-2)$. The proof of Theorem \ref{thm1} is complete now.
 \ \ \ $\Box$

\bigskip
{\bf Remark A}:  Consider the  map defined in (\ref{example}):

\medskip

$F=\big(z_1,\cdots,z_{n-2}, \lambda z_{n-1},z_n, \sqrt{1-\lambda^2}
z_{n-1}(z_1,\cdots,z_{n-1},\mu z_n, \sqrt{1-\mu^2} z_nz)\big),\ \
\lambda, \mu \in (0,1).$
\medskip
The map is apparently a proper monomial map from ${\BB}^n$ into
${\BB}^{3n}$. Write $F=(f_1,\cdots,f_{3n})$. We claim that $F$ is
not equivalent to a map of the form $(G,0)$. Otherwise, there are
complex numbers $\{a_j\}_{j=1}^{3n}$, not all zeros, such that
$\sum_{j=1}^{3n}a_j f_j\equiv 0$, which is obviously impossible just
by comparing the coefficients of degree $3,2,1$, respectively.

The map $F$ is of degree three. It has   geometric rank two just by
observing that the largest dimension of the  affine subspaces where
$F$ is linear is of codimension two. (By a result in [Hu2], this
codimension is the same as the geometric rank of the map.)

As we discussed above, the span of the first and the second jets has
dimension $3n-1$. That means we have one more independent element
from the third jet.
Hence $\phi^{(3,0)}_{33}\not \equiv 0$ for such a map (after
transforming to the Heisenberg hypersurface and after the
normalization) in a generic position.

\bigskip
{\bf Remark B}:  We mention that even for  $N\ge 3n-2$, there are
many rational proper holomorphic map from ${\BB}^n$ into ${\BB}^N$
that are not equivalent to any polynomial maps as shown in a paper
by Faran-Huang-Ji-Zhang \cite{FHJZ}. The following is one of the examples
provided in \cite{FHJZ}:

\medskip
 Let $ F(z',z_n)=\bigg(z', z_nz', z_n^2(\frac{\sqrt{1-|a|^2}z'}{1- \ov a
z_n}, \frac{z_n - a}{1- \ov a z_n}) \bigg)$ with $|a|<1$, which is a
proper rational holomorphic  map  from $\BB^n$ into  $\BB^{3n-2}$.
Then $F$ has geometric rank $1$ and is linear along each hyperplane
defined by $z_n = constant$. $F$ is   equivalent to a proper
polynomial map from ${\BB}^n$ into  ${\BB}^{3n-2}$
 if and only if $a = 0$.

\medskip
This example gives an indication that it is  unpractical  to achieve
a precise classification for  proper rational proper  maps from
${\BB}^n$ into ${\BB}^N$ with $N\in {\cal I}_3$ to get the gap
rigidity.

\bigskip
{\bf Remark C}: This paper is a  simplified  version of the authors'
early   preprint. Theorem \ref{thm1}   was first announced in \cite{HJY}
(Theorem 2.9 in \cite{HJY}).

\bibliographystyle{amsalpha}

\end{document}